\documentclass[12pt,reqno,a4paper]{amsart}
\usepackage[margin=.5in]{geometry}
\usepackage{amsmath,amsthm,pdfsync,verbatim,graphicx,epstopdf,enumerate}
\usepackage{placeins}
\usepackage{amsfonts}
\usepackage{amssymb,amsthm,amsmath}
\usepackage{amsmath}
 \usepackage{enumerate}
\usepackage{ifthen}
\usepackage{graphicx}
\usepackage{mathtools}
\mathtoolsset{showonlyrefs}
\usepackage{cite}
\usepackage{tikz} 
\usetikzlibrary{arrows.meta}
\usepackage{comment}
\usepackage{amsmath,amscd,amssymb}
\usepackage{latexsym}
\usepackage[colorlinks,citecolor=blue,pagebackref,hypertexnames=false]{hyperref}

\numberwithin{equation}{section}
\allowdisplaybreaks[2]
\theoremstyle{plain}
\newtheorem{theorem}{Theorem}[section]

\newtheorem{lemma}[theorem]{Lemma}

\newtheorem{proposition}[theorem]{Proposition}

\theoremstyle{definition}
\newtheorem{definition}[theorem]{Definition}

\theoremstyle{remark}
\newtheorem{remark}[theorem]{Remark}

\newtheorem{case[theorem]}{Case}

\def\norm#1.#2.{\lVert#1\rVert_{#2}}

\title[Mixed Grushin Heat Equations ]{Mixed Grushin Heat Equations with Riesz-Potential Nonlinearities in Marcinkiewicz Spaces: Subcritical, Critical, and Supercritical Regimes }

\author{Aparajita Dasgupta}
\author{Uttam Kumar Dolai}

 \address{\endgraf Department of Mathematics
 Indian Institute of Technology, Delhi, Hauz Khas
New Delhi-110016
 India}

\email{adasgupta@maths.iitd.ac.in}
\address{\endgraf Department of Mathematics
Indian Institute of Technology, Delhi, Hauz Khas
New Delhi-110016
India}
 \email{uk749043@gmail.com}

 \keywords{}
\subjclass[2020]{}
 
\date{\today}

\begin{document}

\maketitle
\begin{abstract}
We study the nonlinear evolution equation
$$
\partial_t u+(G+G^\delta)u=I_\alpha(|u|^\rho)
\quad\text{on }\mathbb{R}^{N+k},
$$
where \(G\) is the nonnegative self adjoint realization of Grushin operator, \(0<\delta<1\), and \(I_\alpha\) is the potential operator with kernel \(|z|^{-\alpha}\), \(0<\alpha<N+k\). We develop a well-posedness theory for initial data in Marcinkiewicz spaces, allowing singular profiles outside the corresponding Lebesgue spaces. Using spectral calculus, subordination, and interpolation, we establish Lebesgue and Lorentz smoothing estimates for the mixed semigroup \(e^{-t(G+G^\delta)}\), which distinguish the second-order behaviour at short times from the fractional decay at large times. In the subcritical regime, considering
$$
\frac1\beta=\frac2Q+1-\frac{\alpha}{d},
\qquad
\frac1{\beta_\delta}=\frac{2\delta}{Q}+1-\frac{\alpha}{d},
$$
where $Q$ is homogenous dimension, we prove local existence, uniqueness, Lipschitz dependence on the initial data, and a blow-up alternative in \(L^{p,\infty}\) when
$$
\rho<1+\frac{p}{\beta}.
$$
At the critical relation \(\rho=1+p/\beta\), a Yamazaki-type estimate and Lorentz duality yield global mild solutions for sufficiently small data in \(L^{p,\infty}\). In the supercritical regime \(\rho>1+p/\beta\), well-posedness is recovered in higher-integrability Marcinkiewicz spaces \(L^{q,\infty}\) satisfying
$$
\beta(\rho-1)<q<\beta_\delta(\rho-1),
$$
with global existence for sufficiently small initial data. In all cases, the solutions attain their initial data in the weak-\(*\) sense. These results extend the Marcinkiewicz-space theory to mixed Grushin diffusion with a spatially nonlocal potential source and reveal the role of the two competing diffusion scales in determining the admissible integrability regimes.
\end{abstract}

	\allowdisplaybreaks

	\tableofcontents

\section{Introduction}

The competition between diffusion and nonlinear growth is a fundamental
theme in the analysis of parabolic equations. Diffusion spreads and
regularizes the initial profile, whereas a superlinear source can
amplify it and lead to finite-time blow-up. The balance between these
mechanisms depends on the smoothing properties of the linear evolution
and the integrability of the nonlinear term. For degenerate diffusion
and spatially nonlocal sources, it also depends on the geometry of the
operator and the decay of the interaction kernel. In this paper, we
study a heat equation driven by a mixed local and nonlocal Grushin
operator with a Riesz-potential source. We develop a local and global
solvability theory in Marcinkiewicz spaces, allowing singular initial
data outside the corresponding strong Lebesgue spaces.

Let $z=(x,y)\in\mathbb{R}^{N}\times\mathbb{R}^{k}$, where $N,k\geq1$,
and set
\[
    d=N+k,\qquad Q=N+2k.
\]
We consider the Grushin operator
\[
    \Delta_G=-\frac12\left(\Delta_x+|x|^2\Delta_y\right),
\]
where $\Delta_x$ and $\Delta_y$ are the classical Laplacians in the
indicated variables. Introduced by Grushin~\cite{MR279436}, this
operator provides a basic model of a degenerate elliptic operator
satisfying H\"ormander's bracket condition. Indeed, its second-order
part is generated by the vector fields
\[
    X_i=\partial_{x_i},\qquad
    Y_{ij}=x_i\partial_{y_j},
    \qquad 1\leq i\leq N,\quad 1\leq j\leq k.
\]
Although the fields $Y_{ij}$ vanish on $\{x=0\}$, the commutators
$[X_i,Y_{ij}]=\partial_{y_j}$ recover the missing directions. This
structure explains how hypoellipticity persists despite the loss of
ellipticity along the degeneracy set. The operator is therefore a
natural setting in which to examine the effect of anisotropic
diffusion on nonlinear evolution. We refer to
\cite{MR2927124,MR2778943,MR3459626,MR3326328} for related geometric,
spectral, and heat-kernel results.

The underlying anisotropy is described by the dilations
\[
    D_\lambda(x,y)=(\lambda x,\lambda^2y),\qquad \lambda>0.
\]
Under these dilations, the Grushin operator has homogeneous degree
two and Lebesgue measure scales by $\lambda^Q$. Consequently, its
global heat-semigroup estimates involve the homogeneous dimension
$Q$, which differs from the topological dimension $d$. Moreover,
the dependence of the coefficients on $x$ prevents the heat semigroup
from being represented as an ordinary Euclidean convolution in all
variables. These features distinguish the Grushin problem from a
translation-invariant diffusion equation and make semigroup estimates
adapted to the operator essential.

We denote by $G$ the nonnegative self-adjoint realization of
$\Delta_G$ on $L^2(\mathbb{R}^{d})$. Applying the partial Fourier
transform in the $y$-variables relates $G$ to the family of scaled
Hermite operators
\[
    \frac12H(\lambda)
    =\frac12\left(-\Delta_x+|\lambda|^2|x|^2\right),
    \qquad \lambda\in\mathbb{R}^{k}.
\]
This correspondence provides a spectral description of $G$ and
allows its fractional powers to be defined by the functional
calculus. In particular, the scalar transform constructed by
Stempak~\cite{MR5099478} combines the partial Fourier transform
with scaled Hermite expansions and realizes the Grushin operator
as a multiplication operator on an associated spectral space.
For $0<\delta<1$, we consider the mixed operator
\[
    \mathcal{G}=G+G^\delta.
\]

Mixed operators have received considerable attention because they
combine diffusion mechanisms acting at different scales. In the
Euclidean setting, the operator
\[
    \mathcal{L}_{a,b}=-a\Delta+b(-\Delta)^\delta,
    \qquad a,b>0,
\]
corresponds to the combination of Brownian diffusion and a jump
process. Related operators occur in dispersal models that allow
both local movement and long-range relocation; see
\cite{MR2915668,MR3332849,MR4249816,MR4651677}.
The Grushin analogue provides a way to combine local and nonlocal
diffusion within a common degenerate geometry: $G^\delta$ is defined
from $G$ itself, and its semigroup is obtained by subordination of
the Grushin heat semigroup. At the spectral level, the multiplier
$\mu+\mu^\delta$ is dominated by $\mu$ at high frequencies and by
$\mu^\delta$ at low frequencies. The semigroup estimates therefore
reflect the two diffusion orders through different bounds at small
and large times.

The nonlinear problem studied here is
\begin{equation}\label{Grushin1}
    \begin{cases}
        \partial_tu+\mathcal{G}u=I_\alpha(|u|^\rho),
            & z\in\mathbb{R}^{d},\quad t>0,\\
        u(0)=u_0,
    \end{cases}
\end{equation}
where $\rho>1$ and $0<\alpha<d$. We use the convention
\begin{equation}\label{Rieszpotential}
    I_\alpha f(z)
    =A_\alpha\int_{\mathbb{R}^{d}}
        \frac{f(\zeta)}{|z-\zeta|^\alpha}\,d\zeta,
    \qquad
    A_\alpha=
    \frac{\Gamma(\alpha/2)}
    {2^{d-\alpha}\pi^{d/2}\Gamma((d-\alpha)/2)},
\end{equation}
whenever the integral is defined. Thus $\alpha$ is the decay
exponent of the kernel; the corresponding Riesz potential has
order $d-\alpha$ in the usual order-based notation
\cite{MR350027,MR1918790}.

Equation~\eqref{Grushin1} contains two different sources of
nonlocality. The fractional diffusion term redistributes the
solution through the spectral geometry of $G$, while the nonlinear
potential aggregates the profile $|u|^\rho$ over Euclidean space.
The latter mechanism is spatially nonlocal even though the source
depends only on the solution at the current time. Its control
requires an integrability gain from the potential estimate as
well as smoothing by the diffusion. A central question is whether
these two effects can be combined to construct solutions for
initial data with borderline integrability, and whether such
solutions remain globally bounded when the initial norm is small.

The classical point of departure is Fujita's
work~\cite{MR214914} on the semilinear heat equation
\[
    u_t-\Delta u=u^\rho.
\]
For nontrivial nonnegative initial data, the exponent
$1+2/d$ separates the range in which global nonnegative solutions
cannot exist from the range admitting global solutions for suitable
small data. The critical case was subsequently resolved through
the work of Hayakawa~\cite{MR338569} and
Sugitani~\cite{MR470493}. Weissler~\cite{MR599472} further
developed the connection between global existence and the
integrability of the initial datum, obtaining small-data results
in the critical Lebesgue space with exponent
$p_c=d(\rho-1)/2>1$. This viewpoint places the initial-data space
at the center of the analysis: the nonlinear exponent alone does
not describe the range of admissible profiles.

For fractional diffusion $(-\Delta)^\delta$, the corresponding
Fujita exponent is $1+2\delta/d$; see
\cite{MR239379,MR470493,MR2920620,MR1732885,MR1854046}.
For Euclidean mixed diffusion, Biagi, Punzo, and
Vecchi~\cite{MR4849503} and Del Pezzo and
Ferreira~\cite{MR4860154} established that the Fujita threshold
for a local power source is likewise governed by the fractional
component. These results show that adding an order-two diffusion
term does not remove the influence of the nonlocal component on
global solvability. They also motivate a careful distinction
between the estimates needed near the initial time and the
behavior of the diffusion at large times.

Spatial convolution sources lead to further changes in this
balance. Filippucci and Ghergu~\cite{MR4385777,MR4400581}
studied parabolic inequalities with nonlinear convolution terms,
while Fino and Torebek~\cite{MR5102211} investigated fractional
parabolic equations with Hartree-type nonlinearities. The more
specific source $I_\alpha(|u|^\rho)$ was studied in
\cite{MR5037982}, where the Fujita critical exponent is shown
to differ from the value suggested by the usual scaling argument.
These results highlight the role of the spatial interaction kernel
in determining nonlinear solvability and the need to distinguish
critical integrability conditions from sharp Fujita thresholds.

For the Grushin operator, Oliveira and
Viana~\cite{MR5073613} obtained heat-semigroup estimates in
Lebesgue spaces and studied the associated semilinear Cauchy
problem. Kogoj, Lima, and Viana~\cite{MR5014751} subsequently
developed a Marcinkiewicz-space theory for the local power
nonlinearity $|u|^{\rho-1}u$. In particular, they obtained
global mild solutions for small initial data at
\[
    p=\frac Q2(\rho-1),
\]
together with results on positivity, symmetry, and self-similarity.
Their work shows that Lorentz-space methods can accommodate
singular profiles in the Grushin setting and provides an analytical
starting point for the problem considered here.

The choice of Marcinkiewicz spaces is motivated both by the
initial data and by the nonlinear estimate. For $1<p<\infty$,
the space $L^{p,\infty}(\mathbb{R}^{d})$, also called weak-$L^p$,
strictly contains $L^p(\mathbb{R}^{d})$. For example, the
anisotropically homogeneous profile
\[
    u_0(x,y)=\varepsilon
        \bigl(|x|^4+|y|^2\bigr)^{-Q/(4p)},\qquad \varepsilon>0,
\]
belongs to $L^{p,\infty}(\mathbb{R}^{d})$ but not to
$L^p(\mathbb{R}^{d})$. Its weak-$L^p$ norm can nevertheless
be made arbitrarily small by choosing $\varepsilon$ small.
Thus a weak-$L^p$ existence theorem enlarges the
admissible data class. In addition, the kernel
$|z|^{-\alpha}$ belongs to $L^{d/\alpha,\infty}$, so
Lorentz spaces provide a natural setting for estimating the
potential term. This compatibility makes it possible to treat
the singularity of the datum and the weak integrability of the
interaction kernel within the same functional framework.

{The novelty of the present work lies in developing a
Marcinkiewicz-space solvability theory for the simultaneous
presence of mixed Grushin diffusion and a spatial Riesz-potential
source. Compared with the pure Grushin equation with a local
power nonlinearity studied in \cite{MR5014751}, our problem
requires control of a spatial convolution source under the
evolution generated by $G+G^\delta$. Compared with the Euclidean
mixed-diffusion results \cite{MR4849503,MR4860154} and the
Riesz-source problem in \cite{MR5037982}, the diffusion here
is degenerate and anisotropic, and the initial data need only
belong to weak-$L^p$. The resulting theory treats a combination
of geometric, spectral, and nonlinear features that is not
covered by these earlier results.}

A distinctive feature of the analysis is the interaction between
the homogeneous dimension $Q=N+2k$, which governs Grushin
smoothing, and the Euclidean dimension $d=N+k$, which enters
the potential estimate. Their simultaneous appearance changes
through a simple substitution of $Q$ for the Euclidean
dimension. Moreover, the two diffusion orders produce different
semigroup bounds at small and large times, while the Euclidean
interaction kernel is not homogeneous under the Grushin
dilations. We therefore obtain the solvability conditions by
matching the semigroup and convolution estimates directly.

The principal nonlinear contributions of the present work concern both the
critical and supercritical integrability regimes. At the endpoint of the
short-time nonlinear time-integrability estimate, the pointwise smoothing
bound produces a nonintegrable time singularity. Lorentz duality, combined
with a Grushin Yamazaki-type estimate and the mixed semigroup factorization,
provides a uniform bound for the nonlinear Duhamel operator and yields global
mild solutions for sufficiently small initial data in the critical
Marcinkiewicz space. Beyond this endpoint, although the direct fixed-point
argument fails in the original space \(L^{p,\infty}\), the two-scale structure
of the mixed diffusion allows well-posedness to be recovered by imposing
higher spatial integrability on the initial data. In this way, the local and
global theory extends across the subcritical, critical, and supercritical
regimes relative to the prescribed Marcinkiewicz space.

Our argument begins with the linear evolution

$$
S(t)=e^{-t(G+G^\delta)}.
$$

Subordination transfers the Grushin heat estimates to the fractional
semigroup, and the commuting factorization

$$
S(t)=e^{-tG}e^{-tG^\delta}
$$

then combines the two diffusion bounds. Interpolation gives, in particular,
for \(1<a\leq b<\infty\) and \(1\leq s\leq\infty\),

$$
\|S(t)f\|_{L^{b,s}}
\leq
C\min\left\{
t^{-\frac Q2(\frac1a-\frac1b)},
t^{-\frac Q{2\delta}(\frac1a-\frac1b)}
\right\}
\|f\|_{L^{a,s}},
\qquad t>0.
$$

This estimate records the order-two smoothing bound at short times and the
fractional decay bound at large times. It also avoids requiring an explicit
formula for the fractional Grushin heat kernel.

To describe the nonlinear results, let \(1<\rho<p<\infty\), and define \(r\)
by

$$
\frac1r=\frac{\rho}{p}+\frac{\alpha}{d}-1.
$$

In the admissible range \(1<r\leq p\), the Lorentz-space H\"older and
convolution inequalities yield

$$
\|I_\alpha(|u|^\rho)\|_{L^{r,\infty}}
\leq C\|u\|_{L^{p,\infty}}^\rho.
$$

We apply these estimates to the mild formulation

$$
u(t)=S(t)u_0+
\int_0^t S(t-s)I_\alpha(|u(s)|^\rho)\,ds.
$$

Combining the potential estimate with the mixed semigroup bound shows that
the short-time singularity in the nonlinear term is governed by

$$
\theta=
\frac Q2\left(
\frac{\rho-1}{p}+\frac{\alpha}{d}-1
\right).
$$

Writing

$$
\frac1\beta=\frac2Q+1-\frac{\alpha}{d},
$$

the condition \(\theta<1\) is equivalent to

$$
\rho<1+\frac{p}{\beta}.
$$

Under these assumptions, for every
\(u_0\in L^{p,\infty}(\mathbb R^d)\), we establish local existence and
uniqueness of mild solutions in

$$
L^\infty\bigl((0,T);L^{p,\infty}(\mathbb R^d)\bigr),
$$

together with continuous dependence on the initial data and a blow-up
alternative.

At the endpoint

$$
\rho=1+\frac{p}{\beta},
$$

the direct estimate produces the nonintegrable time factor
\((t-s)^{-1}\). We therefore use Lorentz duality and a Yamazaki-type
integral estimate to obtain a uniform bound for the nonlinear Duhamel
operator. This yields a global mild solution for sufficiently small
\(u_0\in L^{p,\infty}(\mathbb R^d)\), unique in the corresponding small
ball of

$$
L^\infty\bigl((0,\infty);L^{p,\infty}(\mathbb R^d)\bigr).
$$

Here the endpoint refers to the time-integrability balance above and should
not be interpreted as a sharp Fujita threshold, whose identification would
also require corresponding nonexistence results.

We further treat the regime

$$
\rho>1+\frac{p}{\beta},
$$

which is supercritical relative to the prescribed space
\(L^{p,\infty}(\mathbb R^d)\). In this case the short-time Duhamel
singularity has exponent greater than one, and hence the preceding
\(L^{p,\infty}\)-based contraction argument no longer applies. The mixed
local--nonlocal diffusion, however, provides an additional range of
integrability. Define

$$
\frac1{\beta_\delta}
=
\frac{2\delta}{Q}+1-\frac{\alpha}{d}.
$$

Since \(0<\delta<1\), one has \(\beta_\delta>\beta\). We show that for every

$$
\beta(\rho-1)<q<\beta_\delta(\rho-1),
$$

the problem is locally well posed for initial data
\(u_0\in L^{q,\infty}(\mathbb R^d)\), and the corresponding solution is
global whenever the initial norm is sufficiently small. The lower bound on
\(q\) restores integrability of the short-time singularity, while the upper
bound exploits the fractional large-time decay of the mixed semigroup.
Thus the supercritical nonlinearity can be treated by passing from the
original space \(L^{p,\infty}\) to a higher-integrability Marcinkiewicz
space \(L^{q,\infty}\). This also makes explicit the distinct roles played
by the local and fractional components of the diffusion in the nonlinear
theory.

The paper is organized as follows. Section~2 recalls the Lorentz and
Marcinkiewicz spaces and the inequalities needed for the nonlinear analysis.
Section~3 establishes the fractional and mixed Grushin semigroup estimates.
Section~4 proves local well-posedness and the blow-up alternative in the
subcritical regime. Section~5 establishes global small-data well-posedness
at the critical endpoint and then treats the supercritical regime in
higher-integrability Marcinkiewicz spaces.

\section{Preliminaries}

We collect the properties of Lorentz and Marcinkiewicz spaces needed for the
semigroup estimates and the fixed-point argument. These spaces allow us to
treat singular data beyond the Lebesgue scale and to control the nonlinear
term through suitable product and convolution estimates. Throughout this
section, $(X,\mu)$ is a $\sigma$-finite measure space, and measurable functions
are identified whenever they agree almost everywhere. We omit $(X,\mu)$ from
the notation when the underlying measure space is clear. For a detailed
account of Lorentz spaces, we refer to \cite{MR3243734}.

\begin{definition}
Let $f$ be a measurable function on $(X,\mu)$. Its distribution function is
defined by
\[
    d_f(\alpha)
    :=\mu\bigl(\{x\in X:|f(x)|>\alpha\}\bigr),
    \qquad \alpha\geq0.
\]
The decreasing rearrangement of $f$ is the nonincreasing function
\[
    f^*(t):=\inf\{\alpha\geq0:d_f(\alpha)\leq t\},
    \qquad t>0,
\]
where $\inf\varnothing=\infty$. We also define the maximal rearrangement by
\[
    f^{**}(t):=\frac1t\int_0^t f^*(s)\,\mathrm{d}s,
    \qquad t>0.
\]
\end{definition}

\begin{definition}
Let $0<p<\infty$ and $0<q\leq\infty$. The Lorentz space
$L^{p,q}(X,\mu)$ consists of all measurable functions $f$ for which the
quasi-norm
\[
    \|f\|_{L^{p,q}}^*
    :=
    \begin{cases}
        \displaystyle
        \left(\int_0^\infty
        \bigl[t^{1/p}f^*(t)\bigr]^q
        \frac{\mathrm{d}t}{t}\right)^{1/q},
        & 0<q<\infty,\\[2ex]
        \displaystyle
        \sup_{t>0}t^{1/p}f^*(t),
        & q=\infty,
    \end{cases}
\]
is finite. At the remaining endpoint, we set
$L^{\infty,\infty}(X,\mu)=L^\infty(X,\mu)$, equipped with the usual
essential-supremum norm.
\end{definition}

With this normalization, $L^{p,p}=L^p$ and
$\|f\|_{L^{p,p}}^*=\|f\|_{L^p}$. For $q=\infty$, the relation between
$d_f$ and $f^*$ gives
\begin{align*}
    \|f\|_{L^{p,\infty}}^*
    &=\sup_{\alpha>0}\alpha\,d_f(\alpha)^{1/p}\\
    &=\inf\left\{C>0:
        d_f(\alpha)\leq\left(\frac{C}{\alpha}\right)^p
        \text{ for every }\alpha>0\right\}.
\end{align*}
Thus $L^{p,\infty}$ coincides with the weak $L^p$ space, also called the
Marcinkiewicz space. Chebyshev's inequality yields
\[
    \|f\|_{L^{p,\infty}}^*\leq\|f\|_{L^p},
    \qquad f\in L^p,
\]
and hence the continuous inclusion $L^p\hookrightarrow L^{p,\infty}$.

\begin{remark}
The inclusion is generally strict. On $\mathbb{R}^d$, the function
$h(x)=|x|^{-d/p}$, defined arbitrarily at the origin, satisfies
\[
    d_h(\alpha)=\nu_d\alpha^{-p},
    \qquad \alpha>0,
\]
where $\nu_d$ denotes the measure of the unit ball. Consequently,
\[
    h\in L^{p,\infty}(\mathbb{R}^d)\setminus L^p(\mathbb{R}^d),
    \qquad
    \|h\|_{L^{p,\infty}}^*=\nu_d^{1/p}.
\]
This example illustrates the role of Marcinkiewicz spaces in admitting
singular profiles with borderline integrability.
\end{remark}

For the Banach-space arguments below, we use an equivalent norm constructed
from $f^{**}$. If $1<p<\infty$ and $1\leq q\leq\infty$, set
\[
    \|f\|_{L^{p,q}}
    :=
    \begin{cases}
        \displaystyle
        \left(\int_0^\infty
        \bigl[t^{1/p}f^{**}(t)\bigr]^q
        \frac{\mathrm{d}t}{t}\right)^{1/q},
        & 1\leq q<\infty,\\[2ex]
        \displaystyle
        \sup_{t>0}t^{1/p}f^{**}(t),
        & q=\infty.
    \end{cases}
\]
The absence of a superscript $*$ will always indicate this norm in the
stated range of indices.

\begin{proposition}
Let $1<p<\infty$ and $1\leq q\leq\infty$. Then
$L^{p,q}(X,\mu)$, equipped with $\|\cdot\|_{L^{p,q}}$, is a Banach space,
and
\[
    \|f\|_{L^{p,q}}^*
    \leq\|f\|_{L^{p,q}}
    \leq p'\|f\|_{L^{p,q}}^*,
    \qquad p':=\frac{p}{p-1}.
\]
\end{proposition}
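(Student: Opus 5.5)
The argument splits into three parts: the comparison of norms, the norm axioms, and completeness.

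\emph{Comparison of norms.} Since $f^*$ is nonincreasing, $f^{**}(t)\ge f^*(t)$ for every $t$, and the lower bound $\|f\|^*_{L^{p,q}}\le\|f\|_{L^{p,q}}$ follows at once. For the upper bound, substitute $s=t\sigma$ to write
\[
f^{**}(t)=\int_0^1 f^*(t\sigma)\,\mathrm d\sigma .
\]
By Minkowski's integral inequality in the space $L^q\bigl((0,\infty),\mathrm dt/t\bigr)$,
\[
\Bigl\|t^{1/p}f^{**}\Bigr\|_{L^q(\mathrm dt/t)}\le\int_0^1\Bigl\|t^{1/p}f^*(t\sigma)\Bigr\|_{L^q(\mathrm dt/t)}\,\mathrm d\sigma .
\]
Because $\mathrm dt/t$ is invariant under dilations, the inner norm equals $\sigma^{-1/p}\|f\|^*_{L^{p,q}}$. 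Integrating, $\int_0^1\sigma^{-1/p}\,\mathrm d\sigma=p/(p-1)=p'$, which is finite precisely because $p>1$. For $q=\infty$ the same computation is done directly, using $f^*(t\sigma)\le(t\sigma)^{-1/p}\|f\|^*$ inside the integral. This is a Hardy-inequality argument.

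\emph{Norm axioms.} Homogeneity and definiteness are immediate; definiteness uses the lower bound just proved. The triangle inequality is the real point, since $f\mapsto f^*$ is not subadditive. The remedy is the standard variational formula
\[
t\,f^{**}(t)=\sup\Bigl\{\int_E|f|\,\mathrm d\mu:\ \mu(E)\le t\Bigr\},
\]
valid on a $\sigma$-finite space, with the supremum possibly realized only in the limit when the space has atoms. For nonatomic spaces I would prove it through the layer-cake representation and an equimeasurable choice of level sets. In the general case I would instead take the supremum over functions $0\le\phi\le1$ with $\int\phi\le t$, which handles atoms. This formula shows that $f\mapsto f^{**}(t)$ is sublinear, so $(f+g)^{**}\le f^{**}+g^{**}$ pointwise in $t$. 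Minkowski's inequality in $L^q(\mathrm dt/t)$, or the supremum when $q=\infty$, then gives the triangle inequality for $\|\cdot\|_{L^{p,q}}$.

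\emph{Completeness.} I would show that every absolutely summable series converges. Let $\sum\|f_n\|_{L^{p,q}}<\infty$. Sublinearity of $(\cdot)^{**}$ together with monotone convergence gives
\[
\Bigl(\sum|f_n|\Bigr)^{**}\le\sum f_n^{**},
\]
using that $f^{**}(t)$ depends on $|f|$ only and is lower semicontinuous under monotone limits. Hence $F=\sum|f_n|$ lies in $L^{p,q}$. Its distribution function is then finite, so $F<\infty$ almost everywhere and $\sum f_n$ converges almost everywhere to some $f$. Applying the same bound to the tails gives $\|f-\sum_{n\le M}f_n\|_{L^{p,q}}\le\sum_{n>M}\|f_n\|_{L^{p,q}}\to0$.

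The main obstacle is the variational formula for $f^{**}$ in the presence of atoms. I would either cite \cite{MR3243734} for it or use the formulation with $0\le\phi\le1$, which avoids the issue.
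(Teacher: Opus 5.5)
Your proposal is correct and follows the paper's route. The paper only notes that the lower bound comes from $f^*\le f^{**}$ and the upper bound from Hardy's inequality; your Minkowski/dilation computation proves exactly that, with constant $\int_0^1\sigma^{-1/p}\,\mathrm{d}\sigma=p'$. The paper leaves the Banach-space property to \cite{MR3243734}, and your subadditivity and absolutely-summable-series arguments supply it correctly.

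One correction: with atoms, the set formula $t f^{**}(t)=\sup_{\mu(E)\le t}\int_E|f|\,\mathrm{d}\mu$ fails outright, not merely by being attained only in the limit. A single atom of mass $1$ with $f\equiv 1$ and $t=1/2$ gives $1/2$ on the left and $0$ on the right. So the $0\le\phi\le 1$ formulation is necessary there, not just convenient.
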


The first inequality follows from $f^*\leq f^{**}$, and the second is a
consequence of Hardy's inequality. In particular, the two functionals
define the same topology, but only the unstarred functional will be used
as a norm in the contraction argument.

\begin{remark}
\label{remark2}
Let $0<p,\rho<\infty$ and $0<q\leq\infty$, with the convention
$\rho\infty=\infty$. Since
$(|g|^\rho)^*=(g^*)^\rho$, one has the exact identity
\[
    \big\||g|^\rho\big\|_{L^{p,q}}^*
    =\left(\|g\|_{L^{p\rho,q\rho}}^*\right)^\rho.
\]
If, in addition, $1<p,p\rho<\infty$ and
$1\leq q,q\rho\leq\infty$, norm equivalence gives
\begin{equation}
\label{prhorelation}
    \big\||g|^\rho\big\|_{L^{p,q}}
    \leq p'\left(\|g\|_{L^{p\rho,q\rho}}\right)^\rho.
\end{equation}
\end{remark}

We next recall the duality properties used in the endpoint estimates. For
a Lorentz space $E=L^{p,q}$, its associate space, denoted by $E^\times$,
consists of all measurable functions $g$ such that
\[
    \|g\|_{E^\times}
    :=\sup_{\|f\|_{L^{p,q}}^*\leq1}
      \int_X|fg|\,\mathrm{d}\mu<\infty.
\]
In the Banach range, replacing the quasi-norm in this definition by the
equivalent norm gives the same associate space with an equivalent norm.

\begin{proposition}
Let $1<p<\infty$. The associate spaces satisfy
\[
    (L^{p,q})^\times
    =
    \begin{cases}
        L^{p',\infty}, & 0<q\leq1,\\
        L^{p',q'}, & 1<q\leq\infty,
    \end{cases}
\]
with equivalence of norms, where $1/q+1/q'=1$ and $\infty'=1$.
For $0<q<\infty$, every continuous linear functional $\psi$ on $L^{p,q}$
has the representation
\[
    \psi(f)=\int_X f g\,\mathrm{d}\mu
\]
for a unique $g\in(L^{p,q})^\times$, up to equality almost everywhere.
In particular,
\[
    (L^{p',1})^*=L^{p,\infty},
    \qquad 1<p<\infty,
\]
under the integral pairing and with equivalent norms.
\end{proposition}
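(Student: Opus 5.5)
The plan is to reduce everything to statements about rearrangements on $(0,\infty)$ and then use the standard duality for Lorentz spaces.

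\textbf{Step 1: the inclusion $L^{p',q'}\subseteq (L^{p,q})^\times$ with a norm bound.} Start from the Hardy--Littlewood rearrangement inequality,
\[
\int_X|fg|\,d\mu\le\int_0^\infty f^*(t)g^*(t)\,dt
=\int_0^\infty t^{1/p}f^*(t)\,t^{1/p'}g^*(t)\,\frac{dt}{t}.
\]
\begin{itemize}
\item For $1<q\le\infty$, apply H\"older's inequality with exponents $q,q'$ in the measure $dt/t$. This gives $\int|fg|\le \|f\|^*_{L^{p,q}}\|g\|^*_{L^{p',q'}}$.
\item For $0<q\le1$, bound $t^{1/p'}g^*(t)\le\|g\|^*_{L^{p',\infty}}$. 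This reduces the claim to $\|f\|^*_{L^{p,1}}\le C\|f\|^*_{L^{p,q}}$, which is the monotonicity of Lorentz quasi-norms in the second index. That monotonicity follows from the standard bound $t^{1/p}f^*(t)\le C\|f\|^*_{L^{p,q}}$ together with splitting $(t^{1/p}f^*)^1=(t^{1/p}f^*)^{q}(t^{1/p}f^*)^{1-q}$.
\end{itemize}

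\textbf{Step 2: the reverse inclusion with a norm bound.} Given $g\in(L^{p,q})^\times$, test against suitable $f$.
\begin{itemize}
\item By $\sigma$-finiteness and a standard level-set construction, for nonnegative decreasing $\phi$ on $(0,\infty)$ one has
\[
\sup\Big\{\int|fg|:\ f^*\le\phi\Big\}\ \ge\ c\int_0^\infty\phi g^*\,dt.
\]
In the nonatomic case this holds with $c=1$, via equimeasurable functions arranged along the level sets of $|g|$. In general one uses a retract or approximation argument.
\item This reduces the problem to the Köthe dual of the weighted cone of decreasing functions:
\[
\sup\Big\{\int_0^\infty\phi g^*\,dt:\ \phi\ \text{decreasing},\ \int_0^\infty(t^{1/p}\phi)^q\,\frac{dt}{t}\le1\Big\}.
\]
\item For $q\le1$, take $\phi=\chi_{(0,s)}s^{-1/p}$. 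This gives $s^{-1/p}\int_0^s g^*=s^{1/p'}g^{**}(s)$, so $g\in L^{p',\infty}$.
\item For $q>1$, use Sawyer's duality for decreasing functions (or the identity $\int\phi g^*=\int\phi' \cdot(-)\ldots$ via integration by parts against $g^{**}$). This yields $\|g\|_{L^{p',q'}}$, expressed through $g^{**}$, which is equivalent to the starred norm by the Hardy inequality in the earlier Proposition since $p'>1$.
\end{itemize}
I expect this reverse duality-of-the-cone step to be the main obstacle. My first attempt would be the explicit test function $\phi(t)=\big(\sup_{s\ge t}\ \cdot\big)$, built from powers of $g^{**}$ truncated to $(\epsilon,R)$, in analogy with the $L^p$ extremizer.

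\textbf{Step 3: representation of functionals.} For $0<q<\infty$, simple functions of finite-measure support are dense in $L^{p,q}$, and $L^{p,q}$ has absolutely continuous norm.
\begin{itemize}
\item Given $\psi$, on each set $E$ of finite measure define $\nu(A)=\psi(\chi_A)$. Since $\|\chi_A\|^*=\mu(A)^{1/p}\to0$, $\nu$ is countably additive (by absolute continuity of the norm) and absolutely continuous with respect to $\mu$.
\item Radon--Nikodym gives $g$ on $E$. Exhaust $X$ by $\sigma$-finiteness; uniqueness of $g$ a.e. is automatic.
\item Extend $\psi(f)=\int fg$ from simple functions to all of $L^{p,q}$ by density and dominated convergence.
\item Testing $\psi$ against truncations $f$ shows $\int|fg|\le\|\psi\|\|f\|$, so $g\in(L^{p,q})^\times$.
\end{itemize}

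\textbf{Step 4: conclusion.} The special case follows by taking $q=1$ with $p'$ in place of $p$. Then $(L^{p',1})^\times=L^{p,\infty}$, every functional is represented, and the norm equivalence comes from Steps 1--2 combined with the equivalence of the starred and unstarred norms.
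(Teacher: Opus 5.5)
The paper gives no proof of this proposition. It is quoted from the standard Lorentz-space references, so the relevant comparison is with the textbook argument, and your outline follows it in structure: Hardy--Littlewood plus H\"older for $L^{p',q'}\subseteq(L^{p,q})^\times$, decreasing test functions laid along the level sets of $|g|$, the test function $s^{-1/p}\chi_{(0,s)}$ for $q\le1$, and Radon--Nikodym for the representation. Steps 1, 3 and 4 are sound, with one fix of order in Step 3. First prove $\int_X|fg|\,\mathrm{d}\mu\le\|\psi\|\,\|f\|^*_{L^{p,q}}$, using bounded $h$ with finite-measure support approximated uniformly by simple functions, since $\overline{\operatorname{sgn} g}$ is not simple. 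Only then extend $\psi(f)=\int fg$ by density: the ``dominated convergence'' you invoke needs $fg\in L^1$, which is exactly that bound. Also note $\|\chi_A\|^*_{L^{p,q}}=(p/q)^{1/q}\mu(A)^{1/p}$.

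\textbf{Atoms.} The displayed claim opening Step 2 is false once atoms are present. On a single atom of mass $1$, take $g\equiv1$ and $\phi=\chi_{(0,1/2)}$. The only $f$ with $f^*\le\phi$ is $f=0$, yet $\int_0^\infty\phi g^*\,\mathrm{d}t=1/2$.

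So run Step 2 only for nonatomic $X$; this covers $\mathbb{R}^d$, the only case the paper uses. Recover a general $\sigma$-finite $X$ at the level of norms:
\begin{itemize}
\item lift $g$ to $g\otimes1$ on the nonatomic space $X\times[0,1]$, which leaves every Lorentz norm unchanged;
\item the averaging map $F\mapsto\int_0^1F(\cdot,s)\,\mathrm{d}s$ is contractive on $L^1$ and $L^\infty$, hence bounded from $L^{p,q}(X\times[0,1])$ to $L^{p,q}(X)$ by real interpolation;
\item this gives $\|g\otimes1\|_{(L^{p,q})^\times}\lesssim\|g\|_{(L^{p,q})^\times}$, and the nonatomic result finishes.
\end{itemize}

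\textbf{The step you flagged.} Quoting Sawyer's duality theorem does close it for $1<q<\infty$. With weight $t^{q/p-1}$ the total mass is infinite, so the boundary term drops and the dual expression is, up to a constant, the $g^{**}$-form of $\|g\|_{L^{p',q'}}$. However, a least decreasing majorant of a power of $g^{**}$ is the wrong elementary device. Prescribe $-\phi'$ instead:
\[
\phi(t)=\int_t^\infty s^{q'/p'-2}\,g^{**}(s)^{q'-1}\,\mathrm{d}s ,
\]
which is decreasing by construction. Your integration by parts (Fubini) gives
\[
\int_0^\infty\phi g^*\,\mathrm{d}t=\int_0^\infty\bigl(s^{1/p'}g^{**}(s)\bigr)^{q'}\frac{\mathrm{d}s}{s}=\|g\|_{L^{p',q'}}^{q'} .
\]
For the norm of $\phi$, use the dual Hardy inequality $\bigl\|t^{1/p}\int_t^\infty h(s)\,\mathrm{d}s\bigr\|_{L^q(\mathrm{d}t/t)}\le p\,\bigl\|t^{1/p+1}h(t)\bigr\|_{L^q(\mathrm{d}t/t)}$. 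Combined with $t^{1/p+1}t^{q'/p'-2}g^{**}(t)^{q'-1}=\bigl(t^{1/p'}g^{**}(t)\bigr)^{q'-1}$ and $(q'-1)q=q'$, it gives $\|\phi\|^*_{L^{p,q}}\le p\,\|g\|_{L^{p',q'}}^{q'-1}$.

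Hence $\|g\|_{L^{p',q'}}\le p\,\|g\|_{(L^{p,q})^\times}$. To make every quantity finite, first replace $g$ by $\min(|g|,n)\chi_{E_n}$ with $E_n\uparrow X$ of finite measure, then let $n\to\infty$. With $q'=1$ the same formula gives $\phi=p\,t^{-1/p}$, which settles $q=\infty$, a case Sawyer's theorem does not cover.
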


Here $E^*$ denotes the continuous dual. For $q=\infty$, the associate
space $L^{p',1}$ need not exhaust the full continuous dual of
$L^{p,\infty}$. The preceding predual identification nevertheless yields
the norm characterization needed below:
\[
    \|f\|_{L^{p,\infty}}
    \asymp
    \sup_{\|\varphi\|_{L^{p',1}}\leq1}
    \left|\int_X f\varphi\,\mathrm{d}\mu\right|.
\]
Throughout, $A\lesssim B$ means $A\leq CB$ for a constant independent of
the functions being estimated, and $A\asymp B$ means that both inequalities
hold.

We shall also use H\"older's inequality in Lorentz spaces; see
\cite{MR223874}. Related formulations appear in
\cite{MR146673,MR2279332,MR5014751}.

\begin{proposition}[H\"older's inequality]
\label{generalisedholder}
Let $1<p_1,p_2<\infty$ and $1\leq q_1,q_2,s\leq\infty$. Suppose that
\[
    \frac1r=\frac1{p_1}+\frac1{p_2}<1,
    \qquad
    \frac1s\leq\frac1{q_1}+\frac1{q_2},
\]
where $1/\infty=0$. If $f\in L^{p_1,q_1}$ and $g\in L^{p_2,q_2}$,
then $fg\in L^{r,s}$ and
\[
    \|fg\|_{L^{r,s}}
    \leq C\|f\|_{L^{p_1,q_1}}\|g\|_{L^{p_2,q_2}},
\]
where $C$ depends only on the indices. At the conjugate-exponent endpoint,
for $1<p<\infty$ and $1\leq q\leq\infty$, one also has
\[
    \int_X|fg|\,\mathrm{d}\mu
    \leq C\|f\|_{L^{p,q}}\|g\|_{L^{p',q'}},
    \qquad \frac1q+\frac1{q'}=1.
\]
\end{proposition}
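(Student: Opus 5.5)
The plan is to prove the product estimate through O'Neil's rearrangement inequality $(fg)^*(t_1+t_2)\le f^*(t_1)g^*(t_2)$ and a one-dimensional Hölder inequality in the variable $t$, and to obtain the conjugate endpoint by the Hardy--Littlewood rearrangement inequality. Throughout we may work with the starred quasi-norms and convert to the unstarred norms at the end; the preceding proposition on norm equivalence allows this, since $1<r<\infty$ is guaranteed by $1/r=1/p_1+1/p_2<1$.

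For the first inequality, we take $t_1=t_2=t/2$ in O'Neil's inequality, which gives $(fg)^*(t)\le f^*(t/2)g^*(t/2)$. Writing $t^{1/r}=t^{1/p_1}t^{1/p_2}$, we obtain
\[
t^{1/r}(fg)^*(t)\le 2^{1/r}\,\bigl[(t/2)^{1/p_1}f^*(t/2)\bigr]\bigl[(t/2)^{1/p_2}g^*(t/2)\bigr].
\]
Define $s_0$ by $1/s_0=1/q_1+1/q_2$. Hölder's inequality on $L^{s_0}((0,\infty),dt/t)$ with exponents $q_1/s_0$ and $q_2/s_0$, together with the dilation invariance of $dt/t$, yields
\[
\|fg\|^*_{L^{r,s_0}}\le 2^{1/r}\|f\|^*_{L^{p_1,q_1}}\|g\|^*_{L^{p_2,q_2}}.
\]
If some $q_i=\infty$, that factor is simply bounded by its supremum. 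For $s\ge s_0$, the nesting $L^{r,s_0}\hookrightarrow L^{r,s}$ holds. This follows from the monotonicity of $f^*$, which gives $t^{1/r}f^*(t)\lesssim\|f\|^*_{L^{r,s_0}}$; the $L^{r,s}$ integral is then bounded by splitting off a factor $\sup^{s-s_0}$. The embedding constant depends only on $r$, $s_0$ and $s$. Note that when $s_0<1$ we still end up in the Banach range, because $s\ge1$.

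For the conjugate endpoint, the Hardy--Littlewood inequality gives $\int_X|fg|\,d\mu\le\int_0^\infty f^*(t)g^*(t)\,dt$. We rewrite the integrand as $t^{1/p}f^*(t)\cdot t^{1/p'}g^*(t)\,\frac{dt}{t}$ and apply Hölder with exponents $q$ and $q'$ on $(0,\infty)$ with measure $dt/t$. This yields the bound $\|f\|^*_{L^{p,q}}\|g\|^*_{L^{p',q'}}\le\|f\|_{L^{p,q}}\|g\|_{L^{p',q'}}$.

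The main obstacle is justifying O'Neil's inequality $(fg)^*(t_1+t_2)\le f^*(t_1)g^*(t_2)$. I will derive it from the set inclusion $\{|fg|>ab\}\subset\{|f|>a\}\cup\{|g|>b\}$, taking $a=f^*(t_1)$ and $b=g^*(t_2)$ and using $d_f(f^*(t))\le t$, which comes from right-continuity of the distribution function. The degenerate cases where $f^*$ or $g^*$ is infinite or zero need separate handling, and are trivial. Everything else is bookkeeping of the constants, which depend only on the indices.
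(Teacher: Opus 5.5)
Your argument is correct and complete. It combines O'Neil's rearrangement bound $(fg)^*(t)\le f^*(t/2)g^*(t/2)$, H\"older's inequality in $\mathrm{d}t/t$, the nesting $L^{r,s_0}\hookrightarrow L^{r,s}$, the Hardy--Littlewood inequality for the duality endpoint, and the passage to the $f^{**}$-norms, which is justified because $1<r<\infty$; the paper proves nothing here and only cites Hunt \cite{MR223874} (with O'Neil \cite{MR146673}), whose standard argument is essentially the one you give.
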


For convolution estimates, the underlying space is
$\mathbb{R}^d$ with Lebesgue measure. The following Lorentz-space version
of Young's inequality is due to O'Neil; see \cite{MR146673}.

\begin{theorem}[O'Neil's convolution inequality]
\label{hardysobolevinequlity}
Let $1<p_1,p_2,p<\infty$ and $0<q_1,q_2,q\leq\infty$ satisfy
\[
    1+\frac1p=\frac1{p_1}+\frac1{p_2},
    \qquad
    \frac1q\leq\frac1{q_1}+\frac1{q_2}.
\]
If $f\in L^{p_1,q_1}(\mathbb{R}^d)$ and
$g\in L^{p_2,q_2}(\mathbb{R}^d)$, then
$f*g\in L^{p,q}(\mathbb{R}^d)$ and
\[
    \|f*g\|_{L^{p,q}}^*
    \leq C\|f\|_{L^{p_1,q_1}}^*\|g\|_{L^{p_2,q_2}}^*,
\]
where $C$ depends only on the indices. If $q_1,q_2,q\geq1$, the same
estimate holds with the corresponding unstarred norms.
\end{theorem}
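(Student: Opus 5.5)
The proof of O'Neil's inequality proceeds through rearrangement estimates for convolutions. The first step is O'Neil's pointwise bound
\[
    (f*g)^{**}(t)\leq t f^{**}(t)g^{**}(t)+\int_t^\infty f^*(s)g^*(s)\,\mathrm{d}s,
    \qquad t>0.
\]
To prove it, I decompose $f$ at the level $f^*(t)$, writing $f=f_1+f_2$ with $f_1=(f-\operatorname{sgn}f\, f^*(t))\chi_{\{|f|>f^*(t)\}}$ and $f_2$ bounded by $f^*(t)$. Then $f_1$ has support of measure at most $t$, and I control it in $L^1$ by $\int_0^t f^*$. The term $f_2$ is controlled in $L^\infty$. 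I split $g$ at the same level in the same way. For each of the four pieces I estimate $\|h*k\|_{L^\infty}$ or $\|h*k\|_{L^1}$ by the Hardy--Littlewood rearrangement inequality $\int|hk|\leq\int_0^\infty h^*k^*$, and then I use the subadditivity of $t\mapsto t(\cdot)^{**}(t)$ to recombine. This step is standard but it is the technical heart of the argument. I expect the main obstacle to be the bookkeeping needed to show that the tail pieces produce exactly the integral $\int_t^\infty f^*g^*$ and nothing larger.

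Next I multiply by $t^{1/p}$ and take the $L^q(\mathrm{d}t/t)$ norm. For the first term, $t^{1+1/p}f^{**}g^{**}=t^{1/p_1}f^{**}(t)\,t^{1/p_2}g^{**}(t)$, because $1+1/p=1/p_1+1/p_2$. Hölder's inequality in $L^{q}(\mathrm{d}t/t)$ with $1/q\leq 1/q_1+1/q_2$ then bounds it by $\|f\|_{L^{p_1,q_1}}\|g\|_{L^{p_2,q_2}}$. When $1/q<1/q_1+1/q_2$, I first use the embedding $L^{p,q_0}\hookrightarrow L^{p,q}$ for $q_0\leq q$, with $1/q_0=1/q_1+1/q_2$, which is a consequence of the monotonicity of $f^*$. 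This reduces everything to the equality case. For the second term I apply Hardy's inequality for the operator $h\mapsto\int_t^\infty h(s)\,\mathrm{d}s$ with weight $t^{1/p}$. This is valid since $1/p>0$, i.e.\ $p<\infty$, and it yields
\[
\bigl\|t^{1/p}\textstyle\int_t^\infty f^*g^*\,\mathrm{d}s\bigr\|_{L^q(\mathrm{d}t/t)}\lesssim\|t^{1+1/p}f^*g^*\|_{L^q(\mathrm{d}t/t)}.
\]
I then treat this exactly like the first term, using $f^*\leq f^{**}$.

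Finally, since $(f*g)^*\leq(f*g)^{**}$ and $\|\cdot\|^*\leq\|\cdot\|$, the bounds above give the starred estimate with unstarred norms on the right. By the norm equivalence proposition, $\|f\|_{L^{p_1,q_1}}\leq p_1'\|f\|^*_{L^{p_1,q_1}}$, and this converts the right-hand side to starred quasi-norms. When $q<1$ or $q_i<1$ the functional is only a quasi-norm, so the Hölder and Hardy steps must be carried out in their quasi-normed forms, which hold for all exponents $q>0$ on $(0,\infty)$ with $\mathrm{d}t/t$. When $q,q_1,q_2\geq1$ the unstarred left-hand side is controlled directly, because the bound was obtained for $(f*g)^{**}$. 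This gives the final claim of the theorem.
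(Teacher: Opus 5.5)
The paper gives no proof of this theorem; it quotes it from O'Neil. Your outline (O'Neil's bound for $(f*g)^{**}$, then H\"older and Hardy inequalities in $L^q(\mathrm{d}t/t)$) is the classical argument from that source, and it is correct in substance.

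\textbf{The splitting step.} The bookkeeping you were worried about closes without loss. Set $A=\int_0^t f^*$, $a=tf^*(t)$, and define $B$, $b$ in the same way for $g$. The four pieces contribute at most $(A-a)(B-b)$, $(A-a)b$, $a(B-b)$, and $ab+t\int_t^\infty f^*g^*\,\mathrm{d}s$ to $\int_0^t(f*g)^*$. These add up to exactly $AB+t\int_t^\infty f^*g^*\,\mathrm{d}s$. Splitting only $g$ would also suffice, and the constant is irrelevant for the theorem anyway.

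\textbf{One claim needs correcting.} You say the quasi-normed Hardy inequalities hold for all exponents $q>0$. That is false for general functions. For exponents below $1$, the inequalities for $h\mapsto\int_t^\infty h$ and for $h\mapsto t^{-1}\int_0^t h$ fail for arbitrary nonnegative $h$; a narrow spike such as $\epsilon^{-1}\chi_{[1,1+\epsilon]}$ is a counterexample. They hold only for monotone $h$. This is enough for your argument, because the functions you apply them to, $f^*g^*$ and $f^*$, are nonincreasing, but you should say so.

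\textbf{This matters even in the Banach range.} Your reduction to $1/q_0=1/q_1+1/q_2$ can give $q_0<1$ even when $q,q_1,q_2\geq 1$; for example, $q_1=q_2=q=1$ gives $q_0=1/2$. So you need the monotone Hardy inequality there too. Alternatively, in that range you can avoid it: enlarge $q_1,q_2$ to $\tilde q_i\geq q_i$ with $1/\tilde q_1+1/\tilde q_2=1/q$, and use the embeddings $L^{p_i,q_i}\hookrightarrow L^{p_i,\tilde q_i}$ on the input side.
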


Finally, we record the local Lipschitz estimate for the power nonlinearity
that enters the fixed-point argument.

\begin{lemma}
\label{fractionallemma}
Let $1<\rho<p<\infty$ and $u,v\in L^{p,\infty}(X,\mu)$. For either
$F(w)=|w|^\rho$ or $F(w)=|w|^{\rho-1}w$, one has
\[
    \|F(u)-F(v)\|_{L^{p/\rho,\infty}}
    \leq C_{p,\rho}\|u-v\|_{L^{p,\infty}}
    \left(
        \|u\|_{L^{p,\infty}}^{\rho-1}
        +\|v\|_{L^{p,\infty}}^{\rho-1}
    \right).
\]
In particular, each of these nonlinear maps is locally Lipschitz from
$L^{p,\infty}$ into $L^{p/\rho,\infty}$.
\end{lemma}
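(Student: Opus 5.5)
The plan is to reduce the Lorentz-space estimate to a pointwise inequality followed by the generalized Hölder inequality of Proposition~\ref{generalisedholder}. For both choices of $F$, the mean value theorem applied to $s\mapsto F(sa+(1-s)b)$ on real (or complex) scalars gives
\[
|F(a)-F(b)|\leq \rho\,|a-b|\bigl(|a|^{\rho-1}+|b|^{\rho-1}\bigr),
\]
since $|F'(w)|\lesssim|w|^{\rho-1}$ and $\max_{s\in[0,1]}|sa+(1-s)b|^{\rho-1}\leq\max\{|a|,|b|\}^{\rho-1}\leq |a|^{\rho-1}+|b|^{\rho-1}$ (using $\rho>1$). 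Applying this with $a=u(x)$, $b=v(x)$ and using that $\|\cdot\|_{L^{p/\rho,\infty}}^*$ is a lattice quasi-norm (monotone under pointwise domination, as $f^*$ is), we get
\[
\|F(u)-F(v)\|^*_{L^{p/\rho,\infty}}\lesssim \bigl\||u-v|\,|u|^{\rho-1}\bigr\|^*_{L^{p/\rho,\infty}}+\bigl\||u-v|\,|v|^{\rho-1}\bigr\|^*_{L^{p/\rho,\infty}}.
\]
Since $p/\rho>1$, the starred and unstarred norms on $L^{p/\rho,\infty}$ are equivalent, so the norm stated in the lemma may be used throughout.

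Next I will estimate each product by the Hölder inequality with
\[
p_1=p,\quad p_2=\frac{p}{\rho-1},\quad q_1=q_2=s=\infty,\qquad \frac1r=\frac1p+\frac{\rho-1}{p}=\frac{\rho}{p}<1 .
\]
The last inequality is exactly where the hypothesis $\rho<p$ enters; it also guarantees $1<p_1$, and $p_2<\infty$ holds because $\rho>1$. This gives
\[
\bigl\||u-v|\,|u|^{\rho-1}\bigr\|_{L^{p/\rho,\infty}}\leq C\,\|u-v\|_{L^{p,\infty}}\,\bigl\||u|^{\rho-1}\bigr\|_{L^{p/(\rho-1),\infty}} .
\]
Remark~\ref{remark2}, applied with exponent $\rho-1$ and $q=\infty$, yields the exact identity $\||u|^{\rho-1}\|^*_{L^{p/(\rho-1),\infty}}=(\|u\|^*_{L^{p,\infty}})^{\rho-1}$. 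Combined with norm equivalence, this closes the estimate; the same argument handles the term involving $v$.

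The main obstacle is the Hölder step, since Proposition~\ref{generalisedholder} requires $1<p_2<\infty$, that is, $p>\rho-1$ (automatic from $p>\rho$) together with $p>\rho-1>0$. When $\rho-1<1$ this means $p_2=p/(\rho-1)>p$, which is harmless. However, if one also wants $p_2>1$ for the Banach norm, that too follows from $p>\rho>\rho-1$.

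If any index falls outside the stated Banach range, I would instead use the starred-quasinorm route directly. The rearrangement inequality $(fg)^*(t)\leq f^*(t/2)\,g^*(t/2)$ gives
\[
t^{\rho/p}(fg)^*(t)\leq 2^{\rho/p}\,\bigl[(t/2)^{1/p}f^*(t/2)\bigr]\bigl[(t/2)^{(\rho-1)/p}g^*(t/2)\bigr],
\]
which yields the weak-type Hölder bound with no restriction on the indices. This is the version I would actually write out, since it sidesteps all index conditions.

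Local Lipschitz continuity on bounded sets then follows immediately from the estimate.
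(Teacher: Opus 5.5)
Your proof is correct and follows the paper's argument. Both use the pointwise bound $|F(a)-F(b)|\le C_\rho(|a|^{\rho-1}+|b|^{\rho-1})|a-b|$, Lorentz H\"older with exponents $p$ and $\eta=p/(\rho-1)$ (so that $\rho/p=1/p+1/\eta$), and Remark~\ref{remark2} with exponent $\rho-1$ to turn $\||u|^{\rho-1}\|_{L^{\eta,\infty}}$ into $\|u\|_{L^{p,\infty}}^{\rho-1}$. Your fallback via $(fg)^*(t)\le f^*(t/2)g^*(t/2)$ is a valid self-contained replacement for the H\"older step, but it is not needed, since all the index conditions you worried about hold automatically when $1<\rho<p$.
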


\begin{proof}
For either choice of $F$, the pointwise inequality
\[
    |F(a)-F(b)|
    \leq C_\rho\bigl(|a|^{\rho-1}+|b|^{\rho-1}\bigr)|a-b|
\]
holds for real or complex $a,b$. Set $r=p/\rho$ and
$\eta=p/(\rho-1)$. Since $r,\eta>1$ and
$1/r=1/p+1/\eta$, Proposition~\ref{generalisedholder} gives
\begin{align*}
    \|F(u)-F(v)\|_{L^{r,\infty}}
    &\leq C_{p,\rho}\|u-v\|_{L^{p,\infty}}
    \left(
        \big\||u|^{\rho-1}\big\|_{L^{\eta,\infty}}
        +\big\||v|^{\rho-1}\big\|_{L^{\eta,\infty}}
    \right).
\end{align*}
Applying \eqref{prhorelation} with exponent $\rho-1$ and using
$\eta(\rho-1)=p$ proves the estimate.
\end{proof}

\section{Local-nonlocal Grushin operator and its semigroup}

In this section, we develop the linear estimates underlying the solvability
theory for the nonlinear problem. We begin with the classical Grushin heat
semigroup and its representation through the heat kernels of scaled Hermite
operators. We then use the spectral calculus and subordination to study the
fractional Grushin semigroup. Combining these two families of estimates
yields bounds for the mixed local-nonlocal semigroup in Lebesgue and Lorentz
spaces, which will be used to control the Duhamel term in the subsequent
fixed-point arguments.

Recall that the Grushin operator is defined by
\[
    \Delta_G=-\frac12\bigl(\Delta_x+|x|^2\Delta_y\bigr),
    \qquad (x,y)\in\mathbb{R}^N\times\mathbb{R}^k,
\]
where $\Delta_x$ and $\Delta_y$ are the classical Laplacians in the indicated
variables. Introduced in \cite{MR279436}, this operator is a basic model of
degenerate diffusion, with degeneracy along $\{x=0\}$. We denote by $G$ its
nonnegative self-adjoint realization on $L^2(\mathbb{R}^{N+k})$. For
$0<\delta<1$, the mixed operator and its heat semigroup are given by
\[
    \mathcal{G}=G+G^\delta,
    \qquad
    S_{\mathcal{G}}(t)=e^{-t\mathcal{G}}
    =e^{-tG}e^{-tG^\delta},
    \qquad t\geq0,
\]
where $G^\delta$ is defined by the spectral calculus. The factorization
follows because both operators are functions of the same self-adjoint
operator $G$.

We first describe the kernel of $S_G(t)=e^{-tG}$. We use the partial Fourier
transform convention
\[
    \widehat{\varphi}(x,\xi)
    =\int_{\mathbb{R}^k}e^{-iy\cdot\xi}\varphi(x,y)\,\mathrm{d}y,
    \qquad
    \varphi(x,y)
    =\frac1{(2\pi)^k}
      \int_{\mathbb{R}^k}e^{iy\cdot\xi}
      \widehat{\varphi}(x,\xi)\,\mathrm{d}\xi.
\]
For $\varphi\in\mathcal{S}(\mathbb{R}^{N+k})$, this transform gives
\[
    \widehat{\Delta_G\varphi}(x,\xi)
    =H_{|\xi|}\widehat{\varphi}(x,\xi),
    \qquad
    H_\lambda:=\frac12\bigl(-\Delta_x+\lambda^2|x|^2\bigr).
\]
Thus the Grushin heat equation $\partial_tu+Gu=0$ reduces, at each Fourier
frequency, to the heat equation for a scaled Hermite operator. For
$\lambda>0$, the corresponding fundamental solution satisfies
\begin{equation}
\label{heateq}
    \partial_t v+H_\lambda v=0,
    \qquad v(\cdot,0)=\delta_{x_0}.
\end{equation}
Equivalently,
$\partial_t v=\frac12\Delta_xv-\frac12\lambda^2|x|^2v$.

The kernel of $e^{-tH_\lambda}$ is given by Mehler's formula; see
\cite{MR5073613} and the geometric approach developed in \cite{MR2723056}:
\begin{equation}
\label{heatkernelL}
\begin{aligned}
    L_\lambda(x,x_0;t)
    &=\left(\frac{\lambda}{2\pi\sinh(\lambda t)}\right)^{N/2}\\
    &\quad\times\exp\!\left\{
        -\frac{\lambda}{2\sinh(\lambda t)}
        \left[(|x|^2+|x_0|^2)\cosh(\lambda t)-2x\cdot x_0\right]
    \right\},
    \qquad t>0.
\end{aligned}
\end{equation}
At $\lambda=0$, this expression is understood by continuity and reduces to
the Euclidean heat kernel
\[
    L_0(x,x_0;t)
    =(2\pi t)^{-N/2}
      \exp\!\left(-\frac{|x-x_0|^2}{2t}\right).
\]

Taking the inverse partial Fourier transform in \eqref{heatkernelL}, with
$\lambda=|\xi|$, yields the Grushin heat kernel
\[
    K(x,x_0,y;t)
    =\frac1{(2\pi)^k}
      \int_{\mathbb{R}^k}
      e^{i\xi\cdot y}L_{|\xi|}(x,x_0;t)\,\mathrm{d}\xi.
\]
More explicitly,
\begin{align*}
    K(x,x_0,y;t)
    &=\frac1{(2\pi)^{N/2+k}}
      \int_{\mathbb{R}^k}
      \left(\frac{|\xi|}{\sinh(|\xi|t)}\right)^{N/2}
      e^{i\xi\cdot y}\\
    &\quad\times\exp\!\left\{
        -\frac{|\xi|}{2}
        \left[
            (|x|^2+|x_0|^2)\coth(|\xi|t)
            -2x\cdot x_0\,\operatorname{csch}(|\xi|t)
        \right]
      \right\}\,\mathrm{d}\xi,
\end{align*}
for $(x,x_0,y)\in\mathbb{R}^{2N+k}$ and $t>0$. The integrand at $\xi=0$
is interpreted through the preceding limit. This representation also
appears in \cite{MR4625004}, after accounting for the operator and Fourier
normalizations used there.

The associated heat semigroup therefore admits the integral representation
\[
    S_G(t)\varphi(x,y)
    =\int_{\mathbb{R}^{N+k}}
      K(x,w,y-z;t)\varphi(w,z)\,\mathrm{d}w\,\mathrm{d}z,
    \qquad t>0,
\]
initially for $\varphi\in\mathcal{S}(\mathbb{R}^{N+k})$ and, by extension,
on the Lebesgue spaces considered below. The kernel depends separately on
$x$ and $w$, reflecting the variable coefficients of $G$, whereas its
dependence on the $y$-variables is through the difference $y-z$.

We now recall the $L^p$--$L^q$ estimate for $S_G(t)$ established in
\cite{MR5073613}. Its decay rate is governed by the homogeneous dimension
$Q=N+2k$ associated with the Grushin dilations
$(x,y)\mapsto(rx,r^2y)$.

\begin{theorem}
The family $\{S_G(t)\}_{t\geq0}$ defines a semigroup of positive
contractions on $L^p(\mathbb{R}^{N+k})$ for every $1\leq p\leq\infty$.
For $1\leq p<\infty$, this semigroup is strongly continuous on
$[0,\infty)$.

Moreover, if $1\leq p\leq r\leq\infty$, then
\begin{equation}
\label{Grusinestimates}
    \|S_G(t)\varphi\|_{L^r(\mathbb{R}^{N+k})}
    \leq C\,t^{-\frac{N+2k}{2}
        \left(\frac1p-\frac1r\right)}
    \|\varphi\|_{L^p(\mathbb{R}^{N+k})},
    \qquad t>0,
\end{equation}
where $C$ depends only on $N,k,p,r$. For every
$\varphi\in L^p(\mathbb{R}^{N+k})$, $1\leq p\leq\infty$, and every
$t_0>0$,
\[
    \lim_{t\to t_0}
    \|S_G(t)\varphi-S_G(t_0)\varphi\|_{L^p(\mathbb{R}^{N+k})}=0.
\]
When $p<\infty$, this convergence also holds at $t_0=0$, with
$S_G(0)=I$.
\end{theorem}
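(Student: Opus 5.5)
The plan is to read everything off the explicit kernel $K$ and reduce each assertion to properties of the Mehler kernel. First, positivity and the semigroup law come from the partial Fourier picture. For each $\lambda\ge0$, $e^{-tH_\lambda}$ has the positive kernel $L_\lambda$ of \eqref{heatkernelL} and obeys $e^{-tH_\lambda}e^{-sH_\lambda}=e^{-(t+s)H_\lambda}$. Conjugating by the partial Fourier transform gives $S_G(t)S_G(s)=S_G(t+s)$ on $\mathcal S$ and then on $L^2$ by the spectral theorem. Positivity of $K$ itself is not obvious from the Fourier integral. I would instead deduce it from the Beurling--Deny criteria: the quadratic form $\frac12\int(|\nabla_x f|^2+|x|^2|\nabla_y f|^2)$ is a Dirichlet form, since it is invariant under $f\mapsto|f|$ and under $f\mapsto f\wedge1$. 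Hence $S_G(t)$ is positivity preserving and $L^\infty$-contractive. Self-adjointness then gives $L^1$-contractivity, and Riesz--Thorin gives contractivity on every $L^p$, $1\le p\le\infty$. Alternatively, one can use $\int K\,dw\,dz=L$-mass at $\xi=0$, which gives $\int_{\mathbb R^{N+k}}K(x,w,y-z;t)\,dw\,dz=1$ directly.

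Second, the $L^p$--$L^r$ bound. The key intermediate estimate is the on-diagonal bound $\sup_{x,w,y}|K(x,w,y;t)|\le Ct^{-Q/2}$, which I would obtain from the explicit formula. Bound $|e^{i\xi\cdot y}|\le1$. Then bound the exponential by $1$, using $(|x|^2+|w|^2)\coth-2x\cdot w\,\mathrm{csch}\ge0$, which holds because $\coth\ge\mathrm{csch}$. This leaves
\[
|K|\le C\int_{\mathbb R^k}\Bigl(\frac{|\xi|}{\sinh(|\xi|t)}\Bigr)^{N/2}d\xi
= C t^{-N/2-k}\int_{\mathbb R^k}\Bigl(\frac{|\eta|}{\sinh|\eta|}\Bigr)^{N/2}d\eta,
\]
after the substitution $\eta=t\xi$. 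The last integral is finite because of the exponential decay of $\sinh$. This gives the $L^1\to L^\infty$ bound with exponent $Q/2$. Interpolating it with the $L^1$ and $L^\infty$ contractions by Riesz--Thorin gives $L^p\to L^\infty$ with rate $t^{-Q/(2p)}$. Using duality (the kernel is symmetric), together with the semigroup law $S_G(t)=S_G(t/2)S_G(t/2)$ and a further interpolation, gives the full range $p\le r$ with exponent $\frac Q2(\frac1p-\frac1r)$.

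Third, continuity. On $L^2$, strong continuity is immediate from the spectral theorem. For $1\le p<\infty$, I would take $\varphi\in C_c^\infty$ and control $\|S_G(t)\varphi-\varphi\|_{L^p}$ by interpolating between $L^2$-convergence and uniform $L^1\cap L^\infty$ bounds. For $p<2$ this needs more care: I would show tightness of $S_G(t)\varphi$ for small $t$ using Gaussian off-diagonal decay of $K$. Density of $C_c^\infty$ and the uniform contraction bound then finish the argument. At $t_0>0$ with $p=\infty$, I would use the smoothing property $S_G(t)=S_G(t_0/2)S_G(t-t_0/2)$ together with strong continuity in $L^p$ of the kernel family $K(\cdot;t)$ in $t$; this is equicontinuity of the kernel in $L^1$, again read off the Mehler form.

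The main obstacle is the $L^1$ strong continuity at $t=0$, and more generally the case $p=\infty$ at $t_0>0$. Both need genuine off-diagonal (Gaussian-type) control of $K$ in the anisotropic $y$-variable, which the crude bound above does not supply. I would first try to cite Gaussian bounds for Hörmander sum-of-squares operators (Jerison--Sánchez-Calle) relative to the Grushin control distance. Failing that, I would integrate by parts in $\xi$ in the explicit formula to gain decay in $|y|$.
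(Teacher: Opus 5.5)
The paper gives no proof of this theorem; it quotes it from Oliveira--Viana \cite{MR5073613}, so your argument has to stand on its own. The parts you actually carry out are correct.

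The Beurling--Deny argument for the form $\frac12\int(|\nabla_xf|^2+|x|^2|\nabla_yf|^2)$ gives positivity and contractivity on $L^1$ and $L^\infty$, hence on every $L^p$. The identity $\int K\,dw\,dz=1$ complements positivity but does not replace it. The sup bound via $\coth\ge\operatorname{csch}$ and $\eta=t\xi$ gives exactly $t^{-Q/2}$. Riesz--Thorin between $L^p\to L^p$ and $L^p\to L^\infty$ already yields the full $L^p$--$L^r$ range, so the duality and $S_G(t/2)^2$ step is superfluous.

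Continuity at $t=0$ is also easier than you fear. For $1<p<2$, log-convexity between the uniform $L^1$ bound and $L^2$ convergence suffices. For $p=1$ no off-diagonal decay is needed. If $f\in L^2$ vanishes outside a set $E$ of finite measure, $L^1$-contractivity gives $\|\chi_{E^c}S_G(t)f\|_{L^1}\le\|f\|_{L^1}-\|\chi_ES_G(t)f\|_{L^1}\le\|\chi_E(S_G(t)f-f)\|_{L^1}$. Combined with Cauchy--Schwarz, this yields
\[
\|S_G(t)f-f\|_{L^1}\le 2|E|^{1/2}\|S_G(t)f-f\|_{L^2}\longrightarrow0,
\]
and such $f$ are dense in $L^1$.

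The genuine gap is the claim for $p=\infty$ at $t_0>0$, which you leave to a citation. You correctly sense that an equicontinuity statement is needed. However, the tools you propose (Gaussian off-diagonal decay, tightness, integration by parts in $\xi$) control spatial tails, not differences in $t$. By duality with $L^1$, the natural statement is operator-norm continuity:
\[
\|S_G(t)-S_G(t_0)\|_{L^1\to L^1}=\sup_{(w,z)}\int_{\mathbb{R}^{N+k}}\bigl|K(x,w,y-z;t)-K(x,w,y-z;t_0)\bigr|\,dx\,dy\longrightarrow0.
\]
Since $K$ is not a convolution kernel in $x$, uniformity in the source point $w$ is the whole difficulty.

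The splitting $S_G(t)=S_G(t_0/2)S_G(t-t_0/2)$ does not help here. In the Euclidean case it works because $e^{t\Delta}$ maps $L^\infty$ into bounded uniformly continuous functions, on which it is strongly continuous. For Grushin this fails: for $\psi=e^{iy\cdot\xi}$ with $\xi\neq0$,
\[
S_G(t)\psi(x,y)=e^{iy\cdot\xi}\bigl(\cosh(|\xi|t)\bigr)^{-N/2}e^{-\frac{|\xi|}{2}\tanh(|\xi|t)|x|^2},
\]
so $\|S_G(t)\psi-\psi\|_{L^\infty}=1$ for every $t>0$.

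What is missing is analyticity on $L^1$, for instance $\sup_{s>0}\|sGS_G(s)\|_{L^1\to L^1}<\infty$. This gives $\|S_G(t)-S_G(t_0)\|_{L^1\to L^1}\le C|\log(t/t_0)|$, and then the $L^\infty$ statement follows by duality. There are two ways to get it:
\begin{itemize}
\item Use global Gaussian bounds for the Grushin kernel (Robinson--Sikora; the Jerison--S\'anchez-Calle bounds are only local), extended to complex times by Davies' Phragm\'en--Lindel\"of argument.
\item Work from the Mehler form. Write $k_t(\zeta,\zeta')$ for the kernel. Then $y$-translations, rotations in $x$, and the scaling $k_{\lambda^2t}(D_\lambda\zeta,D_\lambda\zeta')=\lambda^{-Q}k_t(\zeta,\zeta')$ reduce the supremum over sources to $\zeta_0=(0,0)$ and $\zeta_0=(e,0)$ with $|e|=1$. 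It then remains to prove $\sup_{s>0}\|s\,\partial_sk_s(\cdot,\zeta_0)\|_{L^1}<\infty$ for these two points.
\end{itemize}
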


\begin{remark}
Strong continuity at $t=0$ does not hold on all of
$L^\infty(\mathbb{R}^{N+k})$. At this endpoint, one has instead
\[
    S_G(t)\varphi\overset{*}{\rightharpoonup}\varphi
    \quad\text{in }L^\infty(\mathbb{R}^{N+k})
    \quad\text{as }t\downarrow0.
\]
Indeed, by symmetry and strong continuity on $L^1$,
\begin{align*}
    \int_{\mathbb{R}^{N+k}}
    \bigl(S_G(t)\varphi-\varphi\bigr)\psi\,\mathrm{d}x\,\mathrm{d}y
    &=\int_{\mathbb{R}^{N+k}}
    \varphi\bigl(S_G(t)\psi-\psi\bigr)\,\mathrm{d}x\,\mathrm{d}y\\
    &\longrightarrow0
\end{align*}
for every $\psi\in L^1(\mathbb{R}^{N+k})$.
\end{remark}

We next describe $G$ through its spectral resolution. This description
connects the heat-kernel representation with the functional calculus and
provides the basis for defining fractional powers. The nonnegative
self-adjoint realization of the Grushin operator and its spectral
transform are studied in \cite{MR5099478}; here we retain the factor
$1/2$ in the definition of $\Delta_G$.

Consistently with the Fourier convention fixed above, write
\[
    f^\lambda(x):=\widehat f(x,\lambda)
    =\int_{\mathbb{R}^k}
      f(x,y)e^{-i\lambda\cdot y}\,\mathrm{d}y.
\]
This transform is initially defined on Schwartz class functions and extended
to $L^2$ by Plancherel's theorem.
For $f\in\mathcal{S}(\mathbb{R}^{N+k})$, one has
\[
    (Gf)^\lambda=H_{|\lambda|}f^\lambda,
    \qquad
    H_{|\lambda|}
    =\frac12\bigl(-\Delta_x+|\lambda|^2|x|^2\bigr),
\]
and hence
\[
    Gf(x,y)
    =\frac1{(2\pi)^k}
      \int_{\mathbb{R}^k}
      e^{i\lambda\cdot y}
      \bigl(H_{|\lambda|}f^\lambda\bigr)(x)\,\mathrm{d}\lambda.
\]

For $\lambda\neq0$, the scaled Hermite operator $H_{|\lambda|}$ has
eigenvalues
\[
    \mu_j(\lambda)=\frac{(2j+N)|\lambda|}{2},
    \qquad j=0,1,2,\ldots.
\]
Let $P_j(\lambda)$ denote the orthogonal projection of
$L^2(\mathbb{R}^N)$ onto the corresponding eigenspace. Its spectral
resolution is
\[
    H_{|\lambda|}
    =\sum_{j=0}^\infty\mu_j(\lambda)P_j(\lambda),
\]
with convergence on the operator domain in $L^2(\mathbb{R}^N)$.
Consequently, for $f\in D(G)$,
\[
    Gf(x,y)
    =\frac1{(2\pi)^k}
      \int_{\mathbb{R}^k}e^{i\lambda\cdot y}
      \sum_{j=0}^\infty
      \frac{(2j+N)|\lambda|}{2}
      \bigl(P_j(\lambda)f^\lambda\bigr)(x)\,\mathrm{d}\lambda.
\]
This identity is understood in $L^2(\mathbb{R}^{N+k})$, with the integral
interpreted as an inverse partial Fourier transform. The single frequency
$\lambda=0$ does not affect this representation.

Applying the spectral calculus to the function $s\mapsto e^{-ts}$ gives
\[
    \bigl(S_G(t)f\bigr)^\lambda
    =e^{-tH_{|\lambda|}}f^\lambda
    =\sum_{j=0}^\infty
      e^{-\frac{t}{2}(2j+N)|\lambda|}
      P_j(\lambda)f^\lambda.
\]
Thus, for $f\in L^2(\mathbb{R}^{N+k})$ and $t>0$,
\begin{align*}
    S_G(t)f(x,y)=e^{-tG}f(x,y)
    &=\frac1{(2\pi)^k}
      \int_{\mathbb{R}^k}e^{i\lambda\cdot y}
      \sum_{j=0}^\infty
      e^{-\frac{t}{2}(2j+N)|\lambda|}\\
    &\qquad\qquad\times
      \bigl(P_j(\lambda)f^\lambda\bigr)(x)\,\mathrm{d}\lambda,
\end{align*}
again in the $L^2$ sense. This spectral formula agrees with the
heat-kernel representation obtained above and will be used to construct
the fractional Grushin semigroup.
We now define the fractional component of the mixed operator
\[
    \mathcal{G}=G+G^\delta,
    \qquad 0<\delta<1.
\]
Since $G$ is nonnegative and self-adjoint, its fractional power $G^\delta$
is defined by applying the spectral calculus to the function
$\sigma\mapsto\sigma^\delta$. In terms of the preceding spectral
resolution, for $f\in D(G^\delta)$,
\[
    G^\delta f(x,y)
    =\frac1{(2\pi)^k}
      \int_{\mathbb{R}^k}e^{i\lambda\cdot y}
      \sum_{j=0}^\infty
      \left(\frac{(2j+N)|\lambda|}{2}\right)^\delta
      \bigl(P_j(\lambda)f^\lambda\bigr)(x)\,\mathrm{d}\lambda.
\]
This identity is understood in $L^2(\mathbb{R}^{N+k})$. We refer to
Balhara~\cite{MR3938515} for the study of fractional powers of Grushin
operators and associated Hardy inequalities.

The heat semigroup associated with $G^\delta$ can be obtained from
$S_G(t)$ by Bochner subordination, see, for example, \cite{MR2039954}.
To recall the general formulation, let
$\varphi:[0,\infty)\to[0,\infty)$ be a Bernstein function satisfying
$\varphi(0)=0$. Then there exists a   semigroup of probability
measures $\{\nu_t\}_{t\geq0}$ on $[0,\infty)$, characterized by
\begin{equation}
\label{subordinator}
    e^{-t\varphi(\sigma)}
    =\int_{[0,\infty)}e^{-s\sigma}\,\nu_t(\mathrm{d}s),
    \qquad t>0,\quad\sigma\geq0.
\end{equation}
The spectral theorem therefore yields
\[
    e^{-t\varphi(G)}f
    =\int_{[0,\infty)}S_G(s)f\,\nu_t(\mathrm{d}s),
    \qquad f\in L^2(\mathbb{R}^{N+k}),
\]
where the integral is understood in the Bochner sense. If $\nu_t$ is
absolutely continuous, with density $\eta_t$, this becomes
\[
    e^{-t\varphi(G)}f
    =\int_0^\infty\eta_t(s)S_G(s)f\,\mathrm{d}s.
\]

For $0<\delta<1$, the function $\varphi(\sigma)=\sigma^\delta$ is a
Bernstein function. Its associated measures have nonnegative continuous
densities $\eta_t^{(\delta)}$ on $(0,\infty)$, namely the densities of
the $\delta$-stable subordinator, satisfying
\[
    \int_0^\infty
    \eta_t^{(\delta)}(s)e^{-s\sigma}\,\mathrm{d}s
    =e^{-t\sigma^\delta},
    \qquad \sigma\geq0.
\]
In particular,
\[
    \int_0^\infty\eta_t^{(\delta)}(s)\,\mathrm{d}s=1,
    \qquad t>0.
\]
Consequently, the fractional Grushin heat semigroup admits the
representation
\begin{equation}
\label{gfrac}
    e^{-tG^\delta}f
    =\int_0^\infty
      \eta_t^{(\delta)}(s)S_G(s)f\,\mathrm{d}s,
    \qquad t>0.
\end{equation}
Besides its $L^2$ interpretation, this formula holds as a Bochner integral
in $L^p$ for $1\leq p<\infty$, and in the weak-$*$ sense for $p=\infty$.

Finally, the commuting factorization of the mixed semigroup gives
\[
    e^{-t\mathcal{G}}f
    =e^{-tG}e^{-tG^\delta}f
    =\int_0^\infty
      \eta_t^{(\delta)}(s)S_G(t+s)f\,\mathrm{d}s.
\]
These representations allow the smoothing estimates for $S_G(t)$ to be
transferred to the fractional and mixed Grushin semigroups.

\subsection{$L^p$--$L^q$ estimates for the fractional Grushin semigroup}

The subordination formula transfers the smoothing properties of the
Grushin heat semigroup to its fractional counterpart. The dependence on
time is determined by the negative moments of the stable subordinator,
which can be evaluated explicitly using the Gamma function.

\begin{proposition}
Let $0<\delta<1$ and $1\leq p\leq q\leq\infty$. There exists a constant
$C=C(N,k,p,q,\delta)>0$ such that
\begin{equation}
\label{fractionalgrushinestimate}
    \|e^{-tG^\delta}f\|_{L^q(\mathbb{R}^{N+k})}
    \leq C\,t^{-\frac{N+2k}{2\delta}
        \left(\frac1p-\frac1q\right)}
    \|f\|_{L^p(\mathbb{R}^{N+k})}
\end{equation}
for every $t>0$ and $f\in L^p(\mathbb{R}^{N+k})$. When $p=q$, one may
take $C=1$.
\end{proposition}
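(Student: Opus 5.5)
The plan is to transfer the Grushin bound \eqref{Grusinestimates} through the subordination formula \eqref{gfrac}. Apply Minkowski's integral inequality to the Bochner integral in $L^q$; for $q=\infty$, use the weak-$*$ formulation and test against $L^1$ functions. Since $\eta_t^{(\delta)}\geq0$, this gives
\[
    \|e^{-tG^\delta}f\|_{L^q}
    \leq\int_0^\infty\eta_t^{(\delta)}(s)\,\|S_G(s)f\|_{L^q}\,\mathrm{d}s
    \leq C\|f\|_{L^p}\int_0^\infty\eta_t^{(\delta)}(s)\,s^{-\gamma}\,\mathrm{d}s,
    \qquad \gamma:=\frac{Q}{2}\Bigl(\frac1p-\frac1q\Bigr).
\]
When $p=q$ we have $\gamma=0$, and the total mass identity $\int_0^\infty\eta_t^{(\delta)}=1$ together with the contractivity of $S_G$ gives the estimate with $C=1$ at once.

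For $p<q$ the task reduces to computing the negative moment $M_\gamma(t)=\int_0^\infty\eta_t^{(\delta)}(s)s^{-\gamma}\,\mathrm{d}s$. I will use the identity $s^{-\gamma}=\Gamma(\gamma)^{-1}\int_0^\infty\sigma^{\gamma-1}e^{-s\sigma}\,\mathrm{d}\sigma$ and apply Tonelli, which is justified because every term is nonnegative. Combined with the Laplace transform relation for $\eta_t^{(\delta)}$, this yields
\[
    M_\gamma(t)=\frac1{\Gamma(\gamma)}\int_0^\infty\sigma^{\gamma-1}e^{-t\sigma^\delta}\,\mathrm{d}\sigma
    =\frac{\Gamma(\gamma/\delta)}{\delta\,\Gamma(\gamma)}\,t^{-\gamma/\delta},
\]
where the second equality comes from the substitution $u=t\sigma^\delta$. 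Because $\gamma>0$, the integral converges at $0$, and the factor $e^{-t\sigma^\delta}$ ensures convergence at infinity. So the moment is finite for every $\gamma>0$, and $\gamma/\delta=\frac{Q}{2\delta}(\frac1p-\frac1q)$ is exactly the claimed exponent. The constant is therefore $C(N,k,p,q)\,\Gamma(\gamma/\delta)/(\delta\Gamma(\gamma))$. A scaling alternative gives the same result: $\eta_t^{(\delta)}(s)=t^{-1/\delta}\eta_1^{(\delta)}(st^{-1/\delta})$, so $M_\gamma(t)=t^{-\gamma/\delta}M_\gamma(1)$, and finiteness of $M_\gamma(1)$ follows from the Laplace computation.

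I expect the main technical point to be justifying the first inequality, namely the validity of the Bochner or weak-$*$ representation and Minkowski's inequality in the endpoint cases $p=1$ or $q=\infty$. Convergence of the moment is not an issue. For $q<\infty$ and $f\in L^p\cap L^2$, the integral converges in $L^q$ because the computed moment is finite. The general case then follows by density when $p<\infty$. For $q=\infty$, duality handles it: pair with $\psi\in L^1$, move the integral outside by Fubini, and bound each $|\langle S_G(s)f,\psi\rangle|$ using \eqref{Grusinestimates}. If $p=\infty$ as well, then $q=\infty$ and the contraction bound applies directly.
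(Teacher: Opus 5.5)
Your proposal is correct and follows the paper's proof essentially verbatim. Both arguments combine the subordination formula with Minkowski's inequality (using duality at $q=\infty$) and the Grushin $L^p$--$L^q$ bound, and then evaluate the negative moment $\int_0^\infty s^{-\gamma}\eta_t^{(\delta)}(s)\,\mathrm{d}s=\frac{\Gamma(\gamma/\delta)}{\delta\,\Gamma(\gamma)}\,t^{-\gamma/\delta}$ via the Gamma-function identity, Tonelli, the Laplace transform of $\eta_t^{(\delta)}$, and the substitution $u=t\sigma^\delta$; your scaling check and the extra endpoint justifications are harmless additions.
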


\begin{proof}
Suppose first that $p=q$. Since $S_G(s)$ is contractive on $L^p$ and
$\eta_t^{(\delta)}$ has total mass one, \eqref{gfrac} gives
\[
    \|e^{-tG^\delta}f\|_{L^p}
    \leq\int_0^\infty
      \eta_t^{(\delta)}(s)\|S_G(s)f\|_{L^p}\,\mathrm{d}s
    \leq\|f\|_{L^p}.
\]

Now let $p<q$ and set
\[
    a:=\frac{N+2k}{2}\left(\frac1p-\frac1q\right)>0.
\]
Combining \eqref{gfrac} with \eqref{Grusinestimates}, we obtain
\begin{align}
\label{abc}
    \|e^{-tG^\delta}f\|_{L^q}
    &\leq\int_0^\infty
      \eta_t^{(\delta)}(s)\|S_G(s)f\|_{L^q}\,\mathrm{d}s
      \nonumber\\
    &\leq C\|f\|_{L^p}
      \int_0^\infty s^{-a}\eta_t^{(\delta)}(s)\,\mathrm{d}s.
\end{align}
The integral norm inequalities follow from Minkowski's inequality for
finite target exponents and from duality with $L^1$ at the
$L^\infty$ endpoint.

Recall that
\[
    \Gamma(a)=\int_0^\infty u^{a-1}e^{-u}\,\mathrm{d}u,
    \qquad a>0.
\]
A change of variables yields
\begin{equation}
\label{gammaz}
    s^{-a}
    =\frac1{\Gamma(a)}
      \int_0^\infty u^{a-1}e^{-su}\,\mathrm{d}u,
    \qquad s>0.
\end{equation}
Using Tonelli's theorem and the Laplace-transform identity for
$\eta_t^{(\delta)}$, we find
\begin{align*}
    \int_0^\infty s^{-a}\eta_t^{(\delta)}(s)\,\mathrm{d}s
    &=\frac1{\Gamma(a)}
      \int_0^\infty\int_0^\infty
      u^{a-1}e^{-su}\eta_t^{(\delta)}(s)
      \,\mathrm{d}u\,\mathrm{d}s\\
    &=\frac1{\Gamma(a)}
      \int_0^\infty u^{a-1}
      \left(\int_0^\infty
      e^{-su}\eta_t^{(\delta)}(s)\,\mathrm{d}s\right)
      \,\mathrm{d}u\\
    &=\frac1{\Gamma(a)}
      \int_0^\infty u^{a-1}e^{-tu^\delta}\,\mathrm{d}u.
\end{align*}
The substitution $v=tu^\delta$ therefore gives
\begin{align*}
    \int_0^\infty s^{-a}\eta_t^{(\delta)}(s)\,\mathrm{d}s
    &=\frac{t^{-a/\delta}}{\delta\Gamma(a)}
      \int_0^\infty v^{a/\delta-1}e^{-v}\,\mathrm{d}v\\
    &=\frac{\Gamma(a/\delta)}{\delta\Gamma(a)}
      t^{-a/\delta}.
\end{align*}
Substituting this identity into \eqref{abc} and recalling the definition
of $a$ proves \eqref{fractionalgrushinestimate}.
\end{proof}

\subsection{Estimates for the mixed local-nonlocal Grushin operator}

We now combine the estimates for the classical and fractional Grushin
semigroups. Since $G$ and $G^\delta$ are defined through the same spectral
resolution, the heat semigroup associated with
$\mathcal{G}=G+G^\delta$ satisfies
\[
    e^{-t\mathcal{G}}=e^{-tG}e^{-tG^\delta}
    =e^{-tG^\delta}e^{-tG}.
\]
This factorization allows either component to provide the smoothing
estimate while the other acts contractively on the initial-data space.

\begin{theorem}
\label{LpLrestimate}
Let $0<\delta<1$. The family $\{e^{-t\mathcal{G}}\}_{t\geq0}$ is a
semigroup of positive contractions on $L^p(\mathbb{R}^{N+k})$ for every
$1\leq p\leq\infty$. It is strongly continuous on $L^p$ for
$1\leq p<\infty$ and weak-$*$ continuous on $L^\infty$ with respect to
its predual $L^1$.

If $1\leq p\leq r\leq\infty$, then
\[
    \|e^{-t\mathcal{G}}f\|_{L^r(\mathbb{R}^{N+k})}
    \leq C\,t_\delta^{-\frac{N+2k}{2}
        \left(\frac1p-\frac1r\right)}
    \|f\|_{L^p(\mathbb{R}^{N+k})},
    \qquad t>0,
\]
where
\[
    t_\delta:=\max\{t,t^{1/\delta}\},
\]
and $C$ depends only on $N,k,p,r,\delta$. When $p=r$, one may take
$C=1$.
\end{theorem}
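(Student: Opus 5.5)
The plan is to work entirely through the commuting factorization $e^{-t\mathcal G}=e^{-tG}e^{-tG^\delta}=e^{-tG^\delta}e^{-tG}$. In each time regime one factor supplies the smoothing and the other acts as a contraction. First I will note that
\[
t_\delta^{-\frac Q2(\frac1p-\frac1r)}=\min\Bigl\{t^{-\frac Q2(\frac1p-\frac1r)},\,t^{-\frac Q{2\delta}(\frac1p-\frac1r)}\Bigr\},
\]
since $t^{1/\delta}\le t$ for $t\le1$ and $t^{1/\delta}\ge t$ for $t\ge1$. So it suffices to prove each of the two power bounds separately, for all $t>0$, and then take the minimum.

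\emph{Structural properties.} The semigroup law $e^{-(t+s)\mathcal G}=e^{-t\mathcal G}e^{-s\mathcal G}$ holds on $L^2$ by the spectral calculus. I will extend it to $L^p$ by density of $L^2\cap L^p$ once $L^p$-boundedness is known, and at $p=\infty$ by weak-$*$ density. Positivity and $L^p$-contractivity of $e^{-tG^\delta}$ follow from the subordination formula \eqref{gfrac}: $\eta_t^{(\delta)}\ge0$ has mass one and each $S_G(s)$ is a positive contraction. This is the $p=q$ case of \eqref{fractionalgrushinestimate} with $C=1$. Composing with the positive contraction $S_G(t)$ gives that $e^{-t\mathcal G}$ is a positive contraction on every $L^p$, which covers the case $p=r$ with $C=1$.

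\emph{Strong continuity.} For $1\le p<\infty$, I will show strong continuity of $e^{-tG^\delta}$ at $t=0$ from \eqref{gfrac}. Write
\[
\|e^{-tG^\delta}f-f\|_{L^p}\le\int_0^\infty\eta_t^{(\delta)}(s)\|S_G(s)f-f\|_{L^p}\,ds .
\]
Split the integral at a small $s_0$. On $[0,s_0]$ use strong continuity of $S_G$. On $[s_0,\infty)$ the integrand is bounded by $2\|f\|_{L^p}$ times $\nu_t([s_0,\infty))$, and this mass tends to $0$ as $t\to0$ because the Laplace transforms $e^{-t\sigma^\delta}\to1$, so $\nu_t\to\delta_0$ weakly. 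Then the elementary bound
\[
\|e^{-tG}e^{-tG^\delta}f-f\|\le\|e^{-tG}(e^{-tG^\delta}f-f)\|+\|e^{-tG}f-f\|,
\]
together with contractivity, gives strong continuity at $0$, and the semigroup law propagates it to all $t_0$. Weak-$*$ continuity on $L^\infty$ follows exactly as in the Remark after the Grushin estimate: $e^{-t\mathcal G}$ is symmetric, being a real function of the self-adjoint $G$, so I can pair against $\psi\in L^1$ and use strong continuity on $L^1$.

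\emph{Smoothing estimates.} For $p<r$, write $e^{-t\mathcal G}f=S_G(t)\bigl(e^{-tG^\delta}f\bigr)$. Applying \eqref{Grusinestimates} and then $L^p$-contractivity of $e^{-tG^\delta}$ gives the bound $Ct^{-\frac Q2(\frac1p-\frac1r)}\|f\|_{L^p}$. Writing instead $e^{-t\mathcal G}f=e^{-tG^\delta}(S_G(t)f)$, I apply \eqref{fractionalgrushinestimate} and then $L^p$-contractivity of $S_G(t)$, which gives $Ct^{-\frac Q{2\delta}(\frac1p-\frac1r)}\|f\|_{L^p}$. Taking the minimum of the two bounds yields the claim.

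The only genuinely delicate point is justifying the factorization and the subordination identity beyond $L^2$, especially at $p=\infty$. There I will argue by duality: identify $e^{-t\mathcal G}$ on $L^\infty$ as the adjoint of its action on $L^1$. I expect this bookkeeping, rather than the estimates themselves, to be the main obstacle.
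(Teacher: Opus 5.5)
Your proposal is correct and follows the paper's proof essentially step for step. Both use the commuting factorization $e^{-t\mathcal G}=S_G(t)e^{-tG^\delta}=e^{-tG^\delta}S_G(t)$, with one factor smoothing via \eqref{Grusinestimates} or \eqref{fractionalgrushinestimate} while the other acts contractively, then take the minimum $t_\delta^{-a}=\min\{t^{-a},t^{-a/\delta}\}$, and prove strong continuity by the same triangle inequality and weak-$*$ continuity by duality. Your extra details — the splitting argument with $\nu_t\to\delta_0$ showing that subordination preserves strong continuity, and the density/duality extension of the semigroup law beyond $L^2$ — only fill in points the paper asserts without proof.
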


\begin{proof}
Set
\[
    a:=\frac{N+2k}{2}\left(\frac1p-\frac1r\right)\geq0.
\]
Using \eqref{Grusinestimates} and the $L^p$-contractivity of
$e^{-tG^\delta}$, we obtain
\begin{align*}
    \|e^{-t\mathcal{G}}f\|_{L^r}
    &=\|S_G(t)e^{-tG^\delta}f\|_{L^r}\\
    &\leq C t^{-a}\|e^{-tG^\delta}f\|_{L^p}
    \leq C t^{-a}\|f\|_{L^p}.
\end{align*}
Reversing the order of the commuting factors and applying
\eqref{fractionalgrushinestimate} gives
\begin{align*}
    \|e^{-t\mathcal{G}}f\|_{L^r}
    &=\|e^{-tG^\delta}S_G(t)f\|_{L^r}\\
    &\leq C t^{-a/\delta}\|S_G(t)f\|_{L^p}
    \leq C t^{-a/\delta}\|f\|_{L^p}.
\end{align*}
Combining these bounds yields
\[
    \|e^{-t\mathcal{G}}f\|_{L^r}
    \leq C\min\{t^{-a},t^{-a/\delta}\}\|f\|_{L^p}
    =C t_\delta^{-a}\|f\|_{L^p}.
\]
Positivity and contractivity follow from the corresponding properties
of the two factors. For $p<\infty$, Bochner subordination preserves
strong continuity, and hence
\begin{align*}
    \|e^{-t\mathcal{G}}f-f\|_{L^p}
    &\leq\|e^{-tG^\delta}f-f\|_{L^p}
      +\|S_G(t)f-f\|_{L^p}\\
    &\longrightarrow0\qquad\text{as }t\downarrow0.
\end{align*}
The semigroup property gives continuity at every other time. At the
$L^\infty$ endpoint, weak-$*$ continuity follows by duality from strong
continuity on $L^1$ and the symmetry of the semigroup.
\end{proof}

In particular, the time factor is
\[
    t_\delta^{-a}
    =\begin{cases}
        t^{-a}, & 0<t\leq1,\\
        t^{-a/\delta}, & t\geq1.
    \end{cases}
\]
Thus the estimate uses the classical Grushin smoothing rate at short
times and the fractional rate at long times.

Real interpolation extends these bounds to Lorentz spaces; see
\cite{MR482275}. This is the same interpolation principle used for the
classical Grushin semigroup in \cite{MR5014751}. At the Lebesgue endpoints,
we equip $L^{1,1}=L^1$ and $L^{\infty,\infty}=L^\infty$ with their usual
norms.

\begin{theorem}
\label{Marcinkzestimate}
Let $0<\delta<1$, $1\leq p\leq r\leq\infty$, and
$1\leq s\leq\infty$. At the endpoints, assume that $s=1$ if $p=1$,
and $s=\infty$ if $r=\infty$. Then
\begin{equation}
\label{mixedlorentzestimate}
    \|e^{-t\mathcal{G}}f\|_{L^{r,s}(\mathbb{R}^{N+k})}
    \leq C\,t_\delta^{-\frac{N+2k}{2}
        \left(\frac1p-\frac1r\right)}
    \|f\|_{L^{p,s}(\mathbb{R}^{N+k})},
    \qquad t>0,
\end{equation}
where $t_\delta=\max\{t,t^{1/\delta}\}$ and $C$ is independent of $t$
and $f$.

For $1<p<\infty$ and $1\leq s<\infty$, the semigroup is strongly
continuous on $L^{p,s}(\mathbb{R}^{N+k})$. Strong continuity on
$L^{1,1}=L^1$ follows from Theorem~\ref{LpLrestimate}.
\end{theorem}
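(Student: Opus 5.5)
The plan is to deduce \eqref{mixedlorentzestimate} from Theorem~\ref{LpLrestimate} by real interpolation, applied separately to each component of the factorization $e^{-t\mathcal{G}}=S_G(t)e^{-tG^\delta}=e^{-tG^\delta}S_G(t)$. The reason for working factor by factor is that interpolating the combined bound $t_\delta^{-a}$ directly would not give a single power of $t$. Fix $t>0$ and set $a=\frac{Q}{2}(\frac1p-\frac1r)$.

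\emph{Case $1<p$ and $r<\infty$.} First we show that $T_t:=S_G(t)$ maps $L^{p,s}\to L^{r,s}$ with norm at most $Ct^{-a}$, with the same estimate for $e^{-tG^\delta}$ with rate $t^{-a/\delta}$.
\begin{itemize}
\item Choose exponents $1\le p_0<p<p_1\le\infty$ and $r_0,r_1$ with $\frac1{p_i}-\frac1{r_i}=\frac1p-\frac1r$, $p_i\le r_i$, and a common $\theta\in(0,1)$ such that $\frac1p=\frac{1-\theta}{p_0}+\frac{\theta}{p_1}$ and $\frac1r=\frac{1-\theta}{r_0}+\frac{\theta}{r_1}$. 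This is possible by shifting both reciprocals by $\pm\epsilon$ for small $\epsilon>0$.
\item By \eqref{Grusinestimates}, $T_t:L^{p_i}\to L^{r_i}$ with norm at most $Ct^{-a}$; the power is the same at both endpoints precisely because the difference of reciprocals is unchanged.
\item The Marcinkiewicz/Lions--Peetre interpolation theorem, using $(L^{p_0},L^{p_1})_{\theta,s}=L^{p,s}$ with equivalent norms, gives $\|T_t\|_{L^{p,s}\to L^{r,s}}\le Ct^{-a}$ with $C$ independent of $t$.
\item The same argument with \eqref{fractionalgrushinestimate} gives the rate $t^{-a/\delta}$ for $e^{-tG^\delta}$.
\item Taking $p=r$ gives uniform boundedness of both factors on $L^{p,s}$, by interpolating the $L^{p_i}$ contractions.
\end{itemize}
We then compose as in the proof of Theorem~\ref{LpLrestimate}:
\[
\|e^{-t\mathcal{G}}f\|_{L^{r,s}}\le Ct^{-a}\|e^{-tG^\delta}f\|_{L^{p,s}}\le Ct^{-a}\|f\|_{L^{p,s}},
\]
and symmetrically with $t^{-a/\delta}$. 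Taking the minimum yields $Ct_\delta^{-a}$.

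\emph{Endpoint cases.} If $p=1$ then $s=1$, and if $r=\infty$ then $s=\infty$. When $p=1$ and $r<\infty$ we need an estimate $L^1\to L^{r,1}$. We get it by fixing the source $L^1$ and interpolating in the target only: $L^1\to L^{r_0}$ and $L^1\to L^{r_1}$ with different rates $t^{-\frac Q2(1-\frac1{r_i})}$, combined through the $K$-functional. The geometric mean of the two rates is exactly $t^{-a}$. When $r=\infty$ and $p>1$ we argue dually, interpolating in the source with target $L^\infty$ and $s=\infty$. The case $p=1$, $r=\infty$ is Theorem~\ref{LpLrestimate} itself. I expect these endpoint cases to be the main technical obstacle, since the constant must be checked to be independent of $t$ when the endpoint rates differ. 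Writing $\|T\|_{\theta}\le C M_0^{1-\theta}M_1^\theta$ handles this.

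\emph{Strong continuity.} For $1<p<\infty$ and $1\le s<\infty$, simple functions (and $L^{p_0}\cap L^{p_1}$) are dense in $L^{p,s}$. For such $g$, interpolating $\|e^{-t\mathcal{G}}g-g\|_{L^{p_i}}\to0$ via the inequality $\|h\|_{L^{p,s}}\lesssim\|h\|_{L^{p_0}}^{1-\theta}\|h\|_{L^{p_1}}^{\theta}$ gives convergence of $e^{-t\mathcal{G}}g$ to $g$ in $L^{p,s}$. Combined with uniform boundedness and a $3\epsilon$ argument, this gives continuity at $t=0$, and the semigroup property gives continuity at every $t_0>0$.
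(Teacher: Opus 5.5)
Your proof is correct. It differs from the paper's in two places, and the reason you give for the first difference is mistaken.

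\textbf{Main case.} The paper interpolates the combined bound of Theorem~\ref{LpLrestimate} directly. At fixed $t$, the Lebesgue estimates on $(p_0,r_0)$ and $(p_1,r_1)$ carry the same number $Ct_\delta^{-a}$, because $\frac1{p_i}-\frac1{r_i}$ is unchanged. So the interpolated norm is at most $C't_\delta^{-a}$, and nothing requires the time factor to be a single power of $t$. Even when the endpoint exponents differ, as in the $L^1\to L^{r,1}$ case, the geometric mean $t_\delta^{-(1-\theta)a_0}\,t_\delta^{-\theta a_1}$ equals $t_\delta^{-a}$, since $t_\delta$ is just a fixed positive number.

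Your factor-by-factor interpolation followed by composition is still valid, and it has a small payoff. It produces separate Lorentz bounds for $S_G(t)$ and for $e^{-tG^\delta}$. In particular it gives the uniform boundedness of $e^{-tG^\delta}$ on $L^{b,1}$, which the paper later invokes in Lemma~\ref{Yamazaki}.

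\textbf{The case $1<p<r=\infty$.} You interpolate in the source, using $(L^{p_0},L^{p_1})_{\theta,\infty}=L^{p,\infty}$ and $(L^\infty,L^\infty)_{\theta,\infty}=L^\infty$. The paper instead dualizes its $L^1\to L^{r,1}$ bound using symmetry of the kernel. Your route is more direct and avoids identifying the adjoint of $e^{-t\mathcal G}$ on $L^{p,\infty}$.

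\textbf{Minor points.} The cases $p=r=1$ and $p=r=\infty$ fall outside your interpolation windows, which need $r>1$ and $p<\infty$ respectively; refer them to Theorem~\ref{LpLrestimate}. The case $p=1$, $r=\infty$ is excluded by the hypotheses, since it would force $s=1$ and $s=\infty$ at once. Your strong-continuity argument (density, interpolation of two Lebesgue limits, uniform bound) matches the paper's.
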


\begin{proof}
Suppose first that $1<p\leq r<\infty$. Choose exponents
$1<p_0<p<p_1<\infty$ and $1<r_0<r<r_1<\infty$ such that
\[
    \frac1{p_i}-\frac1{r_i}
    =\frac1p-\frac1r,
    \qquad i=0,1.
\]
There exists $\theta\in(0,1)$ for which
\[
    \frac1p=\frac{1-\theta}{p_0}+\frac{\theta}{p_1},
    \qquad
    \frac1r=\frac{1-\theta}{r_0}+\frac{\theta}{r_1}.
\]
Apply Theorem~\ref{LpLrestimate} to the pairs $(p_0,r_0)$ and
$(p_1,r_1)$. Both bounds have the same time factor. The real-interpolation
identities
\[
    (L^{p_0},L^{p_1})_{\theta,s}=L^{p,s},
    \qquad
    (L^{r_0},L^{r_1})_{\theta,s}=L^{r,s},
\]
with equivalence of norms, give \eqref{mixedlorentzestimate}.

For $p=1<r<\infty$ and $s=1$, interpolate the target spaces in the
Lebesgue bounds $L^1\to L^{r_0}$ and $L^1\to L^{r_1}$, where
$1<r_0<r<r_1<\infty$. This yields the required $L^1\to L^{r,1}$
estimate with the stated time factor. By symmetry and Lorentz duality,
its dual estimate gives the case $1<p<r=\infty$, $s=\infty$.
The cases $p=r=1$ and $p=r=\infty$ are already covered by
Theorem~\ref{LpLrestimate}.

Finally, for $1<p<\infty$ and $s<\infty$, smooth compactly supported
functions are dense in $L^{p,s}$. Strong continuity on two neighboring
Lebesgue spaces and interpolation give convergence at $t=0$ for such
functions. The uniform $L^{p,s}$ bound in \eqref{mixedlorentzestimate},
with $p=r$, then extends this convergence to every $f\in L^{p,s}$.
\end{proof}

For weak-$L^p$ data, the appropriate initial trace is expressed through
the predual identification
\[
    (L^{p',1})^*=L^{p,\infty},
    \qquad 1<p<\infty,
\]
with equivalent norms. We use the integral pairing
\[
    \langle f,g\rangle
    :=\int_{\mathbb{R}^{N+k}}f(x,y)g(x,y)\,\mathrm{d}x\,\mathrm{d}y.
\]
For $\varphi\in L^{p,\infty}$ and $\psi\in L^{p',1}$, positivity,
Lorentz H\"older's inequality, and \eqref{mixedlorentzestimate} imply
\[
    \int_{\mathbb{R}^{N+k}}
    \bigl(e^{-t\mathcal{G}}|\varphi|\bigr)|\psi|
    \,\mathrm{d}x\,\mathrm{d}y
    \leq C\|\varphi\|_{L^{p,\infty}}\|\psi\|_{L^{p',1}}<\infty.
\]
The mixed semigroup has a nonnegative symmetric kernel, as follows
from its subordination representation. The preceding bound justifies
Fubini's theorem and therefore gives
\begin{equation}
\label{langle}
    \left\langle e^{-t\mathcal{G}}\varphi,\psi\right\rangle
    =\left\langle\varphi,e^{-t\mathcal{G}}\psi\right\rangle,
    \qquad t>0.
\end{equation}
Since $e^{-t\mathcal{G}}$ is strongly continuous on $L^{p',1}$,
\[
    \left\langle e^{-t\mathcal{G}}\varphi-\varphi,\psi\right\rangle
    =\left\langle\varphi,e^{-t\mathcal{G}}\psi-\psi\right\rangle
    \longrightarrow0\qquad\text{as }t\downarrow0.
\]
Consequently,
\[
    e^{-t\mathcal{G}}\varphi
    \overset{*}{\rightharpoonup}\varphi
    \quad\text{in }L^{p,\infty}(\mathbb{R}^{N+k})
    \quad\text{as }t\downarrow0.
\]
Strong continuity at $t=0$ does not hold on the full space
$L^{p,\infty}$.

\section{Local well-posedness and a blow-up alternative}

In this section, we establish local existence, uniqueness, and
continuous dependence on the initial data for the nonlinear
evolution equation driven by the mixed Grushin operator. We
also derive a continuation criterion: a solution with a finite
maximal existence time must become unbounded in the underlying
Marcinkiewicz norm. The main analytical step is to combine the
integrability gain of the Riesz potential with the short-time
smoothing estimates established in the preceding section.
The Marcinkiewicz-space framework accommodates singular initial
profiles that need not belong to the corresponding Lebesgue
space, as in the semilinear Grushin setting considered
in~\cite{MR5014751}.

As before, write $d=N+k$, $Q=N+2k$, and
$z=(x,y)\in\mathbb{R}^{N}\times\mathbb{R}^{k}$.
We consider the Cauchy problem
\begin{equation}
\label{Grushineq}
\begin{cases}
\partial_t u(t,z)+\mathcal{G}u(t,z)
    =I_{\alpha}(|u(t,\cdot)|^{\rho})(z),
    & t>0,\quad z\in\mathbb{R}^{d},\\[1mm]
u(0,z)=u_0(z),
    & z\in\mathbb{R}^{d},
\end{cases}
\end{equation}
where
\[
\mathcal{G}=G+G^{\delta},
\qquad 0<\delta<1,\qquad 0<\alpha<d,\qquad \rho>1.
\]
Here, $I_{\alpha}$ is the potential operator defined
in~\eqref{Rieszpotential}; in convolution notation,
\[
I_{\alpha}f
=A_{\alpha}\bigl(|\cdot|^{-\alpha}*f\bigr).
\]
The fixed positive normalization factor $A_{\alpha}$ will be
absorbed into the constants in our estimates. With this
convention, $\alpha$ denotes the decay exponent of the kernel,
and the corresponding Riesz potential has order $d-\alpha$.

For $u_0\in L^{p,\infty}(\mathbb{R}^{d})$, with
$1<\rho<p<\infty$, we seek mild solutions satisfying
\begin{equation}
\label{local:mild}
u(t)=e^{-t\mathcal{G}}u_0
    +\int_0^t e^{-(t-s)\mathcal{G}}
        I_{\alpha}(|u(s)|^{\rho})\,\mathrm{d}s,
\qquad 0<t<T,
\end{equation}
with
\[
\sup_{0<t<T}\|u(t)\|_{L^{p,\infty}(\mathbb{R}^{d})}<\infty.
\]
The time integral is understood by duality with
$L^{p',1}(\mathbb{R}^{d})$, and the initial condition is
interpreted in the weak-$*$ sense:
\[
\lim_{t\downarrow0}\langle u(t),\psi\rangle
=\langle u_0,\psi\rangle,
\qquad
\psi\in L^{p',1}(\mathbb{R}^{d}),
\qquad \frac1p+\frac1{p'}=1.
\]

To express the condition underlying the local theory, define
$\beta>0$ by
\begin{equation}
\label{local:beta}
\frac1\beta
=\frac2Q+1-\frac\alpha d
=\frac{2}{N+2k}+1-\frac{\alpha}{N+k}.
\end{equation}
The role of this parameter follows from the nonlinear
estimates. Indeed, if $r$ is determined by
\[
\frac1r=\frac\rho p+\frac\alpha d-1,
\qquad 1<r\leq p,
\]
then the potential and semigroup bounds lead to the time
factor $(t-s)^{-\theta}$ in the Duhamel estimate, where
\begin{equation}
\label{local:theta}
\theta
=\frac Q2\left(\frac1r-\frac1p\right)
=\frac Q2\left(\frac{\rho-1}{p}+\frac\alpha d-1\right).
\end{equation}
Since $\theta\geq0$, this factor is integrable near $s=t$
precisely when $\theta<1$, or equivalently,
\[
\rho<1+\frac p\beta.
\]
Under the admissibility conditions above, this inequality
provides the small-time factor required for the contraction
argument. We now state the local well-posedness result.
\begin{theorem}[Local well-posedness]
\label{Localtheorem}
Let $d=N+k$, $0<\delta<1$, $0<\alpha<d$, and
$1<\rho<p<\infty$. Define $r$ by
\[
\frac1r=\frac\rho p+\frac\alpha d-1,
\]
and assume that $1<r\leq p$ and
\begin{equation}
\label{firstcondition}
\rho<1+\frac p\beta,
\end{equation}
where $\beta$ is defined in~\eqref{local:beta}.
For $T>0$, set
\[
X(T):=L^{\infty}\bigl((0,T);L^{p,\infty}(\mathbb{R}^{d})\bigr),
\qquad
\|u\|_{X(T)}
:=\operatorname*{ess\,sup}_{0<t<T}
\|u(t)\|_{L^{p,\infty}(\mathbb{R}^{d})}.
\]

For every $R>0$, there exists $T_R>0$ such that, for each
$u_0\in L^{p,\infty}(\mathbb{R}^{d})$ satisfying
\[
\|u_0\|_{L^{p,\infty}(\mathbb{R}^{d})}\leq R,
\]
problem~\eqref{Grushineq} admits a unique mild solution
$u\in X(T_R)$ satisfying~\eqref{local:mild}.
The solution attains its initial datum in the weak-$*$ sense:
\[
\lim_{t\downarrow0}\langle u(t)-u_0,\psi\rangle=0,
\qquad
\psi\in L^{p',1}(\mathbb{R}^{d}),
\qquad p'=\frac{p}{p-1}.
\]

Moreover, if $u_0,v_0\in L^{p,\infty}(\mathbb{R}^{d})$
satisfy
\[
\max\bigl\{
\|u_0\|_{L^{p,\infty}(\mathbb{R}^{d})},
\|v_0\|_{L^{p,\infty}(\mathbb{R}^{d})}
\bigr\}\leq R,
\]
then their corresponding solutions $u,v\in X(T_R)$ obey
\begin{equation}
\label{local:continuousdependence}
\|u-v\|_{X(T_R)}
\leq C_R\|u_0-v_0\|_{L^{p,\infty}(\mathbb{R}^{d})}.
\end{equation}
Here, $T_R$ and $C_R$ depend only on $R$ and the fixed
parameters $N,k,\delta,\alpha,\rho,p$.

\end{theorem}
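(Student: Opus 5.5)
We prove the theorem with a Banach fixed-point argument for the Duhamel map
\[
\Phi(u)(t)=e^{-t\mathcal{G}}u_0+\int_0^t e^{-(t-s)\mathcal{G}}I_\alpha(|u(s)|^\rho)\,\mathrm{d}s
\]
on the closed ball $B_M=\{u\in X(T):\|u\|_{X(T)}\le M\}$ with $M=2CR$. The constant $C$ is the constant in \eqref{mixedlorentzestimate} with $p=r$, $s=\infty$, so that $\|e^{-t\mathcal{G}}u_0\|_{L^{p,\infty}}\le C\|u_0\|_{L^{p,\infty}}\le CR$.

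\textbf{Nonlinear estimate.} For $u\in L^{p,\infty}$, Remark~\ref{remark2} gives $|u|^\rho\in L^{p/\rho,\infty}$ with norm $\lesssim\|u\|_{L^{p,\infty}}^\rho$. Since $|\cdot|^{-\alpha}\in L^{d/\alpha,\infty}$ and
\[
1+\frac1r=\frac\rho p+\frac\alpha d ,
\]
O'Neil's inequality (Theorem~\ref{hardysobolevinequlity}) applies, using $1<r$ and $p/\rho>1$. It yields $\|I_\alpha(|u|^\rho)\|_{L^{r,\infty}}\lesssim\|u\|_{L^{p,\infty}}^\rho$. Combining Lemma~\ref{fractionallemma} with Hölder's inequality in the same way gives the difference bound
\[
\|I_\alpha(|u|^\rho)-I_\alpha(|v|^\rho)\|_{L^{r,\infty}}\lesssim\|u-v\|_{L^{p,\infty}}\bigl(\|u\|^{\rho-1}_{L^{p,\infty}}+\|v\|^{\rho-1}_{L^{p,\infty}}\bigr).
\]

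\textbf{Duhamel estimate.} Apply Theorem~\ref{Marcinkzestimate} from $L^{r,\infty}$ to $L^{p,\infty}$, which is allowed since $r\le p$. Using $t_\delta^{-\theta}\le t^{-\theta}$, we get
\[
\Bigl\|\int_0^t e^{-(t-s)\mathcal{G}}F(s)\,\mathrm{d}s\Bigr\|_{L^{p,\infty}}\le C\int_0^t (t-s)^{-\theta}\|F(s)\|_{L^{r,\infty}}\,\mathrm{d}s\le \frac{C\,T^{1-\theta}}{1-\theta}\sup_s\|F(s)\|_{L^{r,\infty}},
\]
where $\theta<1$ is exactly condition \eqref{firstcondition}.

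\textbf{Main obstacle: making sense of the time integral in $L^{p,\infty}$.} The integral must be interpreted by duality with $L^{p',1}$, and $L^{p,\infty}$ is not separable. I would define it weakly. For $\psi\in L^{p',1}$, set $\langle\int_0^t\cdots,\psi\rangle:=\int_0^t\langle F(s),e^{-(t-s)\mathcal{G}}\psi\rangle\,\mathrm{d}s$, using the symmetry \eqref{langle}. Measurability in $s$ holds because $u$ is weak-$*$ measurable and the semigroup is strongly continuous on $L^{p',1}$ and $L^{r',1}$. The integrand is bounded by $(t-s)^{-\theta}$ times the norms. The norm characterization $\|f\|_{L^{p,\infty}}\asymp\sup_{\|\psi\|_{L^{p',1}}\le1}|\langle f,\psi\rangle|$ then turns the dual estimate into the bound above. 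This is the step requiring the most care, so I would state it as a separate lemma.

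\textbf{Contraction and conclusion.} The estimates give
\[
\|\Phi(u)\|_{X(T)}\le CR+C'T^{1-\theta}M^\rho ,\qquad
\|\Phi(u)-\Phi(v)\|_{X(T)}\le 2C'T^{1-\theta}M^{\rho-1}\|u-v\|_{X(T)}.
\]
Choose $T_R$ with $C'T_R^{1-\theta}M^{\rho-1}\le1/4$. Then $\Phi$ maps $B_M$ into itself and is a $1/2$-contraction, so Banach's theorem gives existence and uniqueness in $B_M$. To get uniqueness in all of $X(T_R)$, take any other solution $v$. Then $\|v\|_{X(\tau)}\le M'$, and the same difference estimate on a short interval $[0,\tau]$ forces $u=v$ there. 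A continuation (bootstrap) argument over successive intervals extends this to all of $[0,T_R]$.

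For the dependence on the data, subtract the fixed-point identities for two solutions:
\[
\|u-v\|_{X(T_R)}\le C\|u_0-v_0\|_{L^{p,\infty}}+\tfrac12\|u-v\|_{X(T_R)} ,
\]
which gives \eqref{local:continuousdependence} with $C_R=2C$.

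For the weak-$*$ initial trace, the linear part converges by the weak-$*$ continuity shown after Theorem~\ref{Marcinkzestimate}. The pairing of the Duhamel term with $\psi$ is bounded by $Ct^{1-\theta}M^\rho\|\psi\|_{L^{p',1}}$, which tends to $0$ as $t\downarrow0$.
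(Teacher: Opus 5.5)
Your proposal is essentially the paper's proof. It uses the same ball of radius $2CR$, the same choice $C'T_R^{1-\theta}M^{\rho-1}\le\frac14$ giving self-mapping and a $\frac12$-contraction, and the O'Neil bound combined with Lemma~\ref{fractionallemma} into $L^{r,\infty}$. It also uses the $t^{-\theta}$ smoothing from Theorem~\ref{Marcinkzestimate}, uniqueness in all of $X(T_R)$ by iterating over short intervals, the Lipschitz constant $2C$, and the weak-$*$ trace via the pairing identity.

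The one divergence is how the Duhamel integral is given meaning. You define it weakly against $L^{p',1}$ and bound it through the dual estimate $\|e^{-\tau\mathcal{G}}\psi\|_{L^{r',1}}\lesssim\tau^{-\theta}\|\psi\|_{L^{p',1}}$. The paper instead shows it is a genuine Bochner integral in $L^{p,\infty}$. It proves operator-norm continuity of $e^{-t\mathcal{G}}$ for $t>0$ (spectral calculus on $L^2$, then interpolation). This gives strong measurability of the integrand and norm continuity in time of the Duhamel term, so $\Phi(u)\in X(T)$ and the Bochner integral agrees with the duality interpretation.

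Your weak route can be made to work, but the measurability justification you give is not quite sufficient. Weak-$*$ measurability of $u$ does not pass through the nonlinear, non-weak-$*$-continuous map $w\mapsto I_\alpha(|w|^\rho)$. You should instead use the strong measurability that membership in the Bochner space $X(T)$ provides. You also still need to check that $t\mapsto\Phi(u)(t)$ lies in the same class $X(T)$ in which the contraction is run, and the weak definition alone does not give this. The paper's operator-norm continuity argument is exactly what supplies it.
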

\begin{proof}
Write $S(t)=e^{-t\mathcal{G}}$ and
$\mathcal{N}(w)=I_{\alpha}(|w|^{\rho})$. Throughout the proof,
all Lorentz spaces are defined over $\mathbb{R}^{d}$. Set
\[
E=L^{p,\infty}(\mathbb{R}^{d}),
\qquad F=L^{r,\infty}(\mathbb{R}^{d}),
\]
and equip these spaces with the equivalent Banach norms
introduced in the preliminaries. By $1<r\leq p$ and
\eqref{firstcondition}, the exponent
\[
\theta
=\frac Q2\left(\frac1r-\frac1p\right)
=\frac Q2\left(\frac{\rho-1}{p}+\frac\alpha d-1\right)
\]
satisfies $0\leq\theta<1$. Theorem~\ref{Marcinkzestimate}
therefore gives constants $C_0\geq1$ and $C_S>0$ such that
\begin{equation}
\label{local:linear-bounds}
\|S(t)f\|_E\leq C_0\|f\|_E,
\qquad
\|S(t)g\|_E\leq C_S t^{-\theta}\|g\|_F,
\qquad 0<t\leq1.
\end{equation}
All constants below depend only on the fixed parameters
unless otherwise indicated.

Since $0<\alpha<d$, the kernel $|\cdot|^{-\alpha}$ belongs
to $L^{d/\alpha,\infty}$. Moreover,
\[
1+\frac1r=\frac\alpha d+\frac\rho p,
\qquad \frac d\alpha>1,
\qquad \frac p\rho>1.
\]
Theorem~\ref{hardysobolevinequlity}, together with
Lemma~\ref{fractionallemma}, consequently yields
\begin{align}
\label{local:nonlinear-lipschitz}
\|\mathcal{N}(f)-\mathcal{N}(g)\|_F
&\leq C\bigl\||f|^\rho-|g|^\rho\bigr\|_{L^{p/\rho,\infty}}
\nonumber\\
&\leq C_N
\bigl(\|f\|_E^{\rho-1}+\|g\|_E^{\rho-1}\bigr)
\|f-g\|_E,
\qquad f,g\in E.
\end{align}
Here the normalization factor $A_\alpha$ and the Lorentz norm
of the potential kernel are absorbed into the constants.
Taking $g=0$ also gives
\[
\|\mathcal{N}(f)\|_F\leq C_N\|f\|_E^\rho.
\]

For $0<T\leq1$ and $u\in X(T)$, define the Duhamel map by
\begin{equation}
\label{Duhamelgru}
\mathcal{J}_{u_0}(u)(t)
=S(t)u_0+\int_0^t S(t-s)\mathcal{N}(u(s))\,\mathrm{d}s
=:S(t)u_0+\mathcal{D}(u)(t).
\end{equation}
The integral is well defined in $E$. Indeed, spectral calculus
gives operator-norm continuity of $S(t)$ on $L^2$ for $t>0$.
Interpolation with its $L^1$ and $L^\infty$ bounds, followed
by real interpolation, gives operator-norm continuity on
$L^{a,\infty}$ for every $1<a<\infty$ at positive times.
The semigroup property and~\eqref{local:linear-bounds}
then give operator-norm continuity from $F$ to $E$ away
from $t=0$. Together with~\eqref{local:nonlinear-lipschitz},
this ensures strong measurability of the integrand.
Its Bochner integrability follows from
\begin{align}
\label{eqn23}
\|\mathcal{D}(u)(t)\|_E
&\leq C_S\int_0^t
    (t-s)^{-\theta}\|\mathcal{N}(u(s))\|_F\,\mathrm{d}s
\nonumber\\
&\leq C_SC_N\|u\|_{X(T)}^\rho
    \int_0^t(t-s)^{-\theta}\,\mathrm{d}s
\nonumber\\
&=C_\theta t^{1-\theta}\|u\|_{X(T)}^\rho,
\qquad C_\theta:=\frac{C_SC_N}{1-\theta}.
\end{align}
These Bochner integrals agree with the duality interpretation
in~\eqref{local:mild}. Splitting the integral into a part away
from $s=t$ and a short terminal interval shows that
$\mathcal{D}(u)$ is norm-continuous in time; the latter part
is controlled by~\eqref{eqn23}. In particular,
$\mathcal{D}(u)\in C([0,T);E)$ after setting
$\mathcal{D}(u)(0)=0$, and $\mathcal{J}_{u_0}(u)\in X(T)$.

Taking the essential supremum in~\eqref{eqn23}, we obtain
\begin{equation}
\label{eqn123}
\|\mathcal{J}_{u_0}(u)\|_{X(T)}
\leq C_0\|u_0\|_E
    +C_\theta T^{1-\theta}\|u\|_{X(T)}^\rho.
\end{equation}
Similarly, \eqref{local:nonlinear-lipschitz} gives
\begin{equation}
\label{local:duhamel-lipschitz}
\begin{split}
\|\mathcal{D}(u)-\mathcal{D}(v)\|_{X(T)}
\leq{}&C_\theta T^{1-\theta}
\bigl(\|u\|_{X(T)}^{\rho-1}
    +\|v\|_{X(T)}^{\rho-1}\bigr)\\
&\times\|u-v\|_{X(T)}.
\end{split}
\end{equation}

Fix $R>0$ and suppose that $\|u_0\|_E\leq R$.
Set $M_R=2C_0R$ and choose $T_R\in(0,1]$ such that
\begin{equation}
\label{local:existence-time}
C_\theta T_R^{1-\theta}M_R^{\rho-1}\leq\frac14.
\end{equation}
This choice depends only on $R$ and the fixed parameters.
Consider the closed ball
\[
\mathbb{B}_R
:=\{u\in X(T_R):\|u\|_{X(T_R)}\leq M_R\}.
\]
For $u\in\mathbb{B}_R$, estimates~\eqref{eqn123} and
\eqref{local:existence-time} imply
\[
\|\mathcal{J}_{u_0}(u)\|_{X(T_R)}
\leq\frac{M_R}{2}+\frac{M_R}{4}
\leq M_R.
\]
Moreover, for $u,v\in\mathbb{B}_R$,
\eqref{local:duhamel-lipschitz} yields
\[
\|\mathcal{J}_{u_0}(u)-\mathcal{J}_{u_0}(v)\|_{X(T_R)}
\leq2C_\theta T_R^{1-\theta}M_R^{\rho-1}
    \|u-v\|_{X(T_R)}
\leq\frac12\|u-v\|_{X(T_R)}.
\]
Thus $\mathcal{J}_{u_0}$ is a contraction of
$\mathbb{B}_R$ into itself. Since $X(T_R)$ is a Banach
space, the Banach fixed-point theorem gives a unique fixed
point in $\mathbb{B}_R$. Choosing the representative defined
by~\eqref{Duhamelgru}, we obtain a mild solution for every
$0<t<T_R$.

To verify the initial trace, first note that
\[
\|u(t)-S(t)u_0\|_E
\leq C_\theta t^{1-\theta}M_R^\rho
\longrightarrow0
\qquad\text{as }t\downarrow0.
\]
For every $\psi\in L^{p',1}$, the pairing identity
\eqref{langle} gives
\[
\langle u(t)-u_0,\psi\rangle
=\langle u_0,S(t)\psi-\psi\rangle
    +\langle\mathcal{D}(u)(t),\psi\rangle.
\]
The first term tends to zero by strong continuity on
$L^{p',1}$, while the second tends to zero by the preceding
estimate and Lorentz H\"older's inequality. Hence
$u(t)\overset{*}{\rightharpoonup} u_0$ in $E$ as $t\downarrow0$.

We next establish uniqueness among all mild solutions in
$X(T_R)$. Let $u,v\in X(T_R)$ have the same initial datum
and set
\[
K=\|u\|_{X(T_R)}^{\rho-1}
    +\|v\|_{X(T_R)}^{\rho-1}.
\]
Choose $h\in(0,T_R]$ such that
$C_\theta h^{1-\theta}K<1$. Subtracting their mild
formulations and applying~\eqref{local:duhamel-lipschitz}
on $(0,h)$ gives
\[
\|u-v\|_{X(h)}
\leq C_\theta h^{1-\theta}K\|u-v\|_{X(h)},
\]
so $u=v$ on this interval. If equality has already been
established on $(0,a)$, subtraction of the mild formulations
for $t>a$ leaves only the integral over $(a,t)$.
The same estimate therefore proves equality on
$(a,\min\{a+h,T_R\})$. Iterating over finitely many such
intervals gives $u=v$ throughout $(0,T_R)$.

Finally, let $\|u_0\|_E,\|v_0\|_E\leq R$. Their
corresponding solutions lie in $\mathbb{B}_R$, and hence
\begin{align*}
\|u-v\|_{X(T_R)}
&\leq C_0\|u_0-v_0\|_E
    +\|\mathcal{D}(u)-\mathcal{D}(v)\|_{X(T_R)}\\
&\leq C_0\|u_0-v_0\|_E
    +\frac12\|u-v\|_{X(T_R)}.
\end{align*}
Absorbing the last term yields
\[
\|u-v\|_{X(T_R)}
\leq2C_0\|u_0-v_0\|_E,
\]
which proves~\eqref{local:continuousdependence} and
completes the proof.
\end{proof}

We next discuss the continuation of local solutions.
Since~\eqref{Grushineq} is autonomous, the local theory can
be restarted at any positive time $t_0$ within the interval
of existence, taking $u(t_0)$ as the new initial datum.
By Theorem~\ref{Localtheorem}, the resulting lifespan can
be chosen uniformly when these data range over a bounded
subset of $L^{p,\infty}(\mathbb{R}^{d})$.

Let
\[
u:[0,T)\longrightarrow L^{p,\infty}(\mathbb{R}^{d})
\]
be a mild solution of~\eqref{Grushineq}. Here and below,
a mild solution on $[0,T)$ is required to belong to $X(\tau)$
for every $0<\tau<T$, and the initial value $u(0)=u_0$
is understood in the weak-$*$ sense specified above.
A mild solution
\[
\overline{u}:[0,\overline{T})
\longrightarrow L^{p,\infty}(\mathbb{R}^{d}),
\qquad \overline{T}>T,
\]
is called a \emph{continuation} of $u$ if
\[
\overline{u}(t)=u(t),
\qquad 0\leq t<T.
\]

A mild solution defined on $[0,T_m)$, where
$T_m\in(0,\infty]$, is called \emph{maximal} if it admits
no continuation to a strictly larger time interval.
The number $T_m$ is its \emph{maximal existence time};
when $T_m=\infty$, the solution is called \emph{global}.
The following theorem establishes the existence of a unique
maximal mild solution and gives the corresponding blow-up
alternative in the Marcinkiewicz norm.
\begin{theorem}[Maximal existence and blow-up alternative]
\label{Maximaltheorem}
Let $d=N+k$, $0<\delta<1$, $0<\alpha<d$, and
$1<\rho<p<\infty$. Suppose that
\[
\frac1r=\frac\rho p+\frac\alpha d-1,
\qquad 1<r\leq p,
\]
and
\begin{equation}
\label{hypo1}
\rho<1+\frac p\beta,
\end{equation}
where $\beta$ is defined in~\eqref{local:beta}.
For every $u_0\in L^{p,\infty}(\mathbb{R}^{d})$,
problem~\eqref{Grushineq} admits a unique maximal mild
solution
\[
u:[0,T_m)\longrightarrow L^{p,\infty}(\mathbb{R}^{d}),
\qquad T_m\in(0,\infty],
\]
with $u\in X(T)$ for every $0<T<T_m$.
If $T_m<\infty$, then
\begin{equation}
\label{ftb}
\lim_{t\uparrow T_m}
\|u(t)\|_{L^{p,\infty}(\mathbb{R}^{d})}=+\infty.
\end{equation}
\end{theorem}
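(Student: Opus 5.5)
We would follow the standard continuation argument, building on Theorem~\ref{Localtheorem}: uniqueness on overlaps, gluing to a maximal solution, and then the blow-up alternative deduced from the uniform lifespan $T_R$ for data of norm at most $R$.

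\emph{Step 1 (uniqueness on common intervals).} Let $u$ on $[0,T_1)$ and $v$ on $[0,T_2)$ be mild solutions with the same datum $u_0$. For any $\tau<\min\{T_1,T_2\}$ both lie in $X(\tau)$. The iterated short-interval argument from the uniqueness part of the proof of Theorem~\ref{Localtheorem} then applies verbatim on $(0,\tau)$. It uses only \eqref{local:duhamel-lipschitz} with $K=\|u\|_{X(\tau)}^{\rho-1}+\|v\|_{X(\tau)}^{\rho-1}$ and a step $h$ with $C_\theta h^{1-\theta}K<1$, restricted to $h\le1$ so that \eqref{local:linear-bounds} holds. Hence $u=v$ on the common interval.

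\emph{Step 2 (construction of the maximal solution).} Define $T_m$ as the supremum of all $T$ for which a mild solution exists on $[0,T)$; by Theorem~\ref{Localtheorem}, $T_m\ge T_R>0$. By Step~1, all such solutions agree on their overlaps, so they glue to a single $u$ on $[0,T_m)$ with $u\in X(\tau)$ for all $\tau<T_m$. This $u$ is maximal and unique.

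We also need a concatenation lemma. If $u$ solves on $[0,t_0]$, with $u(t_0)\in E$ given by the Duhamel representative, and $w$ solves on $[0,T')$ with datum $u(t_0)$, then
\[
\bar u(t)=
\begin{cases}
u(t), & t\le t_0,\\
w(t-t_0), & t>t_0,
\end{cases}
\]
is a mild solution on $[0,t_0+T')$. This follows from the semigroup property: $S(t-t_0)u(t_0)$ expands as $S(t)u_0$ plus the integral over $(0,t_0)$, and a change of variables handles the remaining part. Measurability and the $X$-bound are immediate.

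\emph{Step 3 (blow-up alternative).} Suppose $T_m<\infty$, and suppose \eqref{ftb} fails. Then there exist $R<\infty$ and $t_n\uparrow T_m$ with $\|u(t_n)\|_E\le R$. Here $u(t_n)$ is taken as the Duhamel representative, which is norm-continuous in $t$, so pointwise values make sense. Pick $n$ with $t_n>T_m-T_R$. Theorem~\ref{Localtheorem} applied to the datum $u(t_n)$ gives a solution on $[0,T_R)$, and the concatenation extends $u$ to $[0,t_n+T_R)$, where $t_n+T_R>T_m$. This contradicts maximality, so $\liminf_{t\uparrow T_m}\|u(t)\|_E=\infty$, which is \eqref{ftb}.

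The main obstacle is the technical one in Step~2: the concatenation identity must hold with the integral in the weak-$*$/Bochner sense, and evaluating $u(t_0)$ requires the continuous representative rather than an $L^\infty$-in-time class. We would handle this using the norm continuity of $\mathcal D(u)$ established in the proof of Theorem~\ref{Localtheorem}, together with the weak-$*$ continuity of $S(t)u_0$ for $t>0$; the latter is actually norm continuity away from $0$, by the operator-norm continuity noted there.
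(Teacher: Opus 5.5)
Your proposal is correct and follows essentially the same route as the paper. Both arguments define $T_m$ as the supremum of existence times, glue solutions via the short-interval uniqueness argument, and then contradict maximality by restarting from $u(t_n)$ with the uniform lifespan $T_R$ and concatenating through the semigroup property. The one minor difference is that the paper also identifies $v_n(s)=u(t_n+s)$ using the restart formula and uniqueness; your argument does not need this, since a mild solution on $[0,t_n+T_R)$ with $t_n+T_R>T_m$ already contradicts the definition of $T_m$ as a supremum.
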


\begin{proof}
Write $E=L^{p,\infty}(\mathbb{R}^{d})$,
$S(t)=e^{-t\mathcal{G}}$, and
$\mathcal{N}(w)=I_\alpha(|w|^\rho)$.
Theorem~\ref{Localtheorem} provides a local mild solution
for the initial datum $u_0$. Define
\[
\mathcal{T}
:=\{T>0:\text{there is a mild solution on }[0,T)
\text{ with initial datum }u_0\},
\qquad
T_m:=\sup\mathcal{T}.
\]
Here mild solutions are understood in the locally bounded
class specified above. In particular,
$\mathcal{T}\neq\varnothing$ and $T_m\in(0,\infty]$.
The uniqueness argument in the proof of
Theorem~\ref{Localtheorem}, applied on successive short
intervals, shows that any two such solutions coincide
throughout their common interval of existence.
They therefore define a single mild solution on $[0,T_m)$.
For each $T<T_m$, one may choose $\widehat T\in\mathcal{T}$
with $T<\widehat T$; hence this solution belongs to $X(T)$.
By the definition of $T_m$, it admits no continuation to a
larger interval. This proves existence and uniqueness of
the maximal mild solution.

The semigroup property allows the mild formulation to be
restarted at every $t_0\in(0,T_m)$:
\begin{equation}
\label{local:restart}
u(t)=S(t-t_0)u(t_0)
    +\int_{t_0}^{t}S(t-s)\mathcal{N}(u(s))\,\mathrm{d}s,
\qquad t_0<t<T_m.
\end{equation}
Indeed, this identity follows by splitting the Duhamel
integral at $t_0$ and using the mild formulation for
$u(t_0)$.

Suppose now that $T_m<\infty$ and that~\eqref{ftb} fails.
Then there exist $R>0$ and a sequence
$t_n\uparrow T_m$, with $0<t_n<T_m$, such that
\[
\|u(t_n)\|_E\leq R
\qquad\text{for every }n.
\]
By Theorem~\ref{Localtheorem}, there is a time
$\tau_R>0$, independent of $n$, such that the problem with
initial datum $u(t_n)$ admits a mild solution
$v_n\in X(\tau_R)$. Thus
\[
v_n(s)=S(s)u(t_n)
    +\int_0^s S(s-\sigma)\mathcal{N}(v_n(\sigma))
        \,\mathrm{d}\sigma,
\qquad 0<s<\tau_R.
\]
By~\eqref{local:restart} and uniqueness on every compact
subinterval of the common existence interval,
\[
v_n(s)=u(t_n+s),
\qquad 0\leq s<\min\{\tau_R,T_m-t_n\}.
\]

Choose $n$ sufficiently large that $t_n+\tau_R>T_m$,
and define
\[
\widetilde u(t)
:=\begin{cases}
u(t), & 0\leq t\leq t_n,\\[1mm]
v_n(t-t_n), & t_n<t<t_n+\tau_R.
\end{cases}
\]
This function agrees with $u$ on $[0,T_m)$ and is locally
bounded in $E$. Substituting the mild formulation for
$u(t_n)$ into that for $v_n$, and using the semigroup
property, gives
\[
\widetilde u(t)=S(t)u_0
    +\int_0^t S(t-s)\mathcal{N}(\widetilde u(s))
        \,\mathrm{d}s,
\qquad 0<t<t_n+\tau_R.
\]
Consequently, $\widetilde u$ is a mild continuation of $u$
beyond $T_m$, contradicting maximality. This proves
\eqref{ftb}.
\end{proof}

\section{Global well-posedness}

We now consider the endpoint
\[
\rho=1+\frac p\beta,
\qquad
\frac1\beta=\frac2Q+1-\frac\alpha d,
\qquad d=N+k,\quad Q=N+2k.
\]
At this endpoint, the exponent in~\eqref{local:theta} equals
one, and the pointwise semigroup estimate produces the
nonintegrable time kernel $(t-s)^{-1}$. To control the
nonlinear term, we combine Lorentz duality with a
time-integrated semigroup estimate. This yields global mild
solutions for sufficiently small initial data, together with
uniqueness and Lipschitz dependence in a fixed small ball of
the solution space.

We first record the mixed Grushin version of the Yamazaki
estimate; see~\cite{MR5014751,MR1777114}.
Throughout this section, $S(t)=e^{-t\mathcal{G}}$.

\begin{lemma}[Yamazaki-type estimate]
\label{Yamazaki}
Let $0<\delta<1$ and $1<a<b<\infty$. Then
\[
\int_0^\infty
t^{\frac Q2(\frac1a-\frac1b)-1}
\|S(t)\phi\|_{L^{b,1}(\mathbb{R}^{d})}\,\mathrm{d}t
\leq C\|\phi\|_{L^{a,1}(\mathbb{R}^{d})},
\qquad \phi\in L^{a,1}(\mathbb{R}^{d}),
\]
where $C$ depends only on $N,k,\delta,a,b$.
\end{lemma}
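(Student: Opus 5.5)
The plan is to prove the estimate by real interpolation. The idea is to write the left-hand side, after a change of variables, as the $(\theta,1)$ real-interpolation norm of $\phi$ between two Lorentz spaces $L^{a_0,1}$ and $L^{a_1,1}$ with $a_0<a<a_1$. This is the classical route to Yamazaki's estimate. The mixed structure helps here, because $t_\delta=\max\{t,t^{1/\delta}\}\geq t$. Thus for every exponent $\gamma\ge0$ we have $t_\delta^{-\gamma}\leq t^{-\gamma}$ for all $t>0$, and the large-time fractional decay can simply be discarded. Set $\gamma:=\frac Q2(\frac1a-\frac1b)>0$.

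\emph{Step 1: two endpoint bounds.} Choose $1<a_0<a<a_1<b$, and set $\gamma_i:=\frac Q2(\frac1{a_i}-\frac1b)$, so that $\gamma_0>\gamma>\gamma_1>0$. Theorem~\ref{Marcinkzestimate} with $s=1$ is admissible, since $a_i>1$ and $b<\infty$. Combined with $t_\delta\ge t$, it gives
\[
\|S(t)\phi_i\|_{L^{b,1}}\leq C\,t^{-\gamma_i}\|\phi_i\|_{L^{a_i,1}},\qquad t>0,\ i=0,1.
\]
For $\phi\in L^{a,1}\subset L^{a_0,1}+L^{a_1,1}$ and any splitting $\phi=\phi_0+\phi_1$, linearity of $S(t)$ then yields
\[
t^{\gamma}\|S(t)\phi\|_{L^{b,1}}\leq C\bigl(t^{\gamma-\gamma_0}\|\phi_0\|_{L^{a_0,1}}+t^{\gamma-\gamma_1}\|\phi_1\|_{L^{a_1,1}}\bigr).
\]
Factor out $t^{\gamma-\gamma_0}$ and put $\lambda:=\gamma_0-\gamma_1>0$. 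Taking the infimum over splittings gives
\[
t^{\gamma}\|S(t)\phi\|_{L^{b,1}}\leq C\,t^{-(\gamma_0-\gamma)}K\bigl(t^{\lambda},\phi;L^{a_0,1},L^{a_1,1}\bigr),
\]
where $K$ denotes the Peetre $K$-functional.

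\emph{Step 2: integrate in $t$.} Integrate against $dt/t$ and substitute $\sigma=t^\lambda$, so that $dt/t=\lambda^{-1}d\sigma/\sigma$ and $t^{-(\gamma_0-\gamma)}=\sigma^{-\theta}$ with $\theta:=(\gamma_0-\gamma)/\lambda\in(0,1)$. This gives
\[
\int_0^\infty t^{\gamma-1}\|S(t)\phi\|_{L^{b,1}}\,dt\leq \frac C\lambda\int_0^\infty \sigma^{-\theta}K(\sigma,\phi)\frac{d\sigma}{\sigma}=\frac C\lambda\|\phi\|_{(L^{a_0,1},L^{a_1,1})_{\theta,1}}.
\]
Since $\gamma_i$ is affine in $1/a_i$, the relation $\gamma=(1-\theta)\gamma_0+\theta\gamma_1$ is equivalent to $\frac1a=\frac{1-\theta}{a_0}+\frac\theta{a_1}$.

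\emph{Step 3: identify the interpolation space.} The reiteration theorem for Lorentz spaces gives $(L^{a_0,1},L^{a_1,1})_{\theta,1}=L^{a,1}$ with equivalent norms; see \cite{MR482275}. Inserting this into Step 2 proves the lemma, with $C$ depending only on $N,k,\delta,a,b$ through the choice of $a_0,a_1$.

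The only point needing care is Step 3, together with the justification that $t\mapsto\|S(t)\phi\|_{L^{b,1}}$ is measurable. Measurability follows from the operator-norm continuity of $S(t)$ for $t>0$, noted in the proof of Theorem~\ref{Localtheorem}. The identification in Step 3 is standard, because $a_0\neq a_1$ and both lie in $(1,\infty)$. I expect no genuine obstacle, since the convergence of the $dt/t$ integral at both $t\to0$ and $t\to\infty$ is encoded entirely in the $K$-functional.
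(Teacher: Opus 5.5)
Your proof is correct, but it takes a different route from the paper.

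The paper factors $S(t)=e^{-tG^\delta}S_G(t)$. It notes that $e^{-tG^\delta}$ is uniformly bounded on $L^{b,1}$, by interpolating its Lebesgue contractivity, and then invokes the Yamazaki estimate for the pure Grushin semigroup from \cite{MR5014751}.

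You use neither the factorization nor the external lemma. Instead you:
\begin{enumerate}[(i)]
\item take the mixed Lorentz bound of Theorem~\ref{Marcinkzestimate} with $s=1$ for two source exponents $a_0<a<a_1<b$ and a common target $b$;
\item weaken $t_\delta^{-\gamma_i}$ to $t^{-\gamma_i}$;
\item carry out the $K$-functional computation directly;
\item close with the reiteration identity $(L^{a_0,1},L^{a_1,1})_{\theta,1}=L^{a,1}$.
\end{enumerate}
In effect you reprove the classical Yamazaki lemma. Your version holds for any semigroup with power-type $L^{a_i,1}\to L^{b,1}$ bounds on either side of $a$. Your computations of $\theta$, of the substitution $\sigma=t^\lambda$, and of the relation $\frac1a=\frac{1-\theta}{a_0}+\frac{\theta}{a_1}$ are all correct.

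The paper's argument is a two-line reduction, but it rests entirely on the cited Grushin result. Yours is self-contained given Theorem~\ref{Marcinkzestimate} and a standard interpolation theorem, at the cost of a slightly longer computation. Both arguments discard the extra large-time fractional decay. That decay is not needed, because the $K$-functional controls both ends of the $dt/t$ integral at once.

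One minor point concerns measurability. The operator-norm continuity mentioned in the proof of Theorem~\ref{Localtheorem} is stated on $L^{a,\infty}$, not on $L^{b,1}$. It is cleaner to write $S(t)\phi=S(t-\varepsilon)S(\varepsilon)\phi$ for $t>\varepsilon$ and use the strong continuity of $S$ on $L^{b,1}$ from Theorem~\ref{Marcinkzestimate}. This makes $t\mapsto\|S(t)\phi\|_{L^{b,1}}$ continuous on $(0,\infty)$.
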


\begin{proof}
The subordinated semigroup $e^{-tG^\delta}$ is contractive
on the Lebesgue spaces. Real interpolation therefore gives
\[
\|e^{-tG^\delta}f\|_{L^{b,1}}
\leq C_b\|f\|_{L^{b,1}},
\qquad t>0.
\]
Using the commuting factorization
$S(t)=e^{-tG^\delta}S_G(t)$, we obtain
\[
\|S(t)\phi\|_{L^{b,1}}
\leq C_b\|S_G(t)\phi\|_{L^{b,1}}.
\]
Multiplication by $t^{\frac Q2(\frac1a-\frac1b)-1}$
and integration over $(0,\infty)$ reduce the conclusion to
the Yamazaki estimate for the classical Grushin semigroup
established in~\cite{MR5014751}.
\end{proof}

\begin{theorem}[Global well-posedness for small initial data]
\label{Globaltheorem}
Let $0<\delta<1$, $0<\alpha<d$, and $1<\rho<p<\infty$
satisfy
\begin{equation}
\label{hypo2}
\rho=1+\frac p\beta.
\end{equation}
Set
\[
E=L^{p,\infty}(\mathbb{R}^{d}),
\qquad X=L^\infty\bigl((0,\infty);E\bigr).
\]
There exist constants $\eta,\varepsilon >0$ such that,
whenever $u_0\in E$ and $\|u_0\|_E\leq\eta$,
problem~\eqref{Grushineq} admits a global mild solution
\[
u\in X
\qquad \|u\|_X\leq\varepsilon.
\]
Moreover, the initial datum is attained in the
weak-$*$ sense:
\[
u(t)\stackrel{*}\rightharpoonup u_0
\quad\text{in }E\quad\text{as }t\downarrow0.
\]
The solution is unique among global mild solutions in
\[
B_{\varepsilon}(X)
:=\{v\in X:\|v\|_X\leq\varepsilon\}.
\]
Moreover, solutions corresponding to initial data
$u_0,v_0\in E$ with
$\|u_0\|_E,\|v_0\|_E\leq\eta$ satisfy
\begin{equation}
\label{global:continuousdependence}
\|u-v\|_X\leq C\|u_0-v_0\|_E.
\end{equation}
In particular, $\|u\|_X\leq C\|u_0\|_E$.
All constants appeared here depend only on the fixed parameters
$N,k,\delta,\alpha,\rho,p$.
\end{theorem}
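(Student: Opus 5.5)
The plan is a contraction argument for the Duhamel map $\mathcal{J}_{u_0}$ from \eqref{Duhamelgru} on the closed ball $B_\varepsilon(X)$. The only new ingredient compared with Theorem~\ref{Localtheorem} is a global-in-time bound for the nonlinear part
\[
\|\mathcal{D}(u)-\mathcal{D}(v)\|_X\leq C_1\bigl(\|u\|_X^{\rho-1}+\|v\|_X^{\rho-1}\bigr)\|u-v\|_X ,
\]
with $C_1$ independent of time. This bound cannot come from the pointwise estimate, since at \eqref{hypo2} we have $\theta=1$. I will obtain it by duality.

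\textbf{Duality bound for the nonlinear term.} By the norm characterization of $E$ through its predual $L^{p',1}$, and since $1/r-1/p=2/Q$,
\[
\|\mathcal{D}(u)(t)\|_E\asymp\sup_{\|\varphi\|_{L^{p',1}}\leq1}\Bigl|\int_0^t\langle\mathcal{N}(u(s)),S(t-s)\varphi\rangle\,\mathrm{d}s\Bigr|.
\]
To justify moving $S(t-s)$ onto $\varphi$, I will use the symmetry identity \eqref{langle}, extended to the pairing between $L^{r,\infty}$ and $L^{r',1}$ by the same positivity/Fubini argument. Lorentz H\"older (Proposition~\ref{generalisedholder}) then bounds the integrand by $\|\mathcal{N}(u(s))\|_{L^{r,\infty}}\|S(t-s)\varphi\|_{L^{r',1}}$. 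Since $\|\mathcal{N}(u(s))\|_{L^{r,\infty}}\leq C_N\|u\|_X^\rho$, it remains to bound $\int_0^\infty\|S(\tau)\varphi\|_{L^{r',1}}\,\mathrm{d}\tau$.

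Here Lemma~\ref{Yamazaki} applies with $a=p'$ and $b=r'$. Indeed, $1<p'<r'<\infty$ because $1<r<p$; the strict inequality $r<p$ holds since $\theta=1$ forces $1/r-1/p=2/Q>0$, and $r>1$ must be assumed or checked as in Theorem~\ref{Localtheorem}. With these choices,
\[
\frac Q2\Bigl(\frac1{p'}-\frac1{r'}\Bigr)-1=\frac Q2\Bigl(\frac1r-\frac1p\Bigr)-1=\theta-1=0,
\]
so the time weight in the lemma is identically one. This yields $\|\mathcal{D}(u)(t)\|_E\leq C\|u\|_X^\rho$ uniformly in $t$. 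The same computation, combined with \eqref{local:nonlinear-lipschitz} in place of the bound for $\mathcal{N}(u)$ alone, gives the difference estimate displayed above.

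The remaining technical point is measurability and the meaning of the integral, since $\mathcal{D}(u)(t)$ is now defined only weak-$*$. I will define it as the functional $\varphi\mapsto\int_0^t\langle\mathcal{N}(u(s)),S(t-s)\varphi\rangle\,\mathrm{d}s$. The integrand is measurable by the operator-norm continuity argument of Theorem~\ref{Localtheorem}, and the resulting functional is bounded on $L^{p',1}$, hence represented by an element of $E=(L^{p',1})^*$. I expect this identification to be the main obstacle, as it is where the endpoint forces the weak formulation.

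\textbf{Contraction.} Using $\|S(t)u_0\|_E\leq C_0\|u_0\|_E$, we get
\[
\|\mathcal{J}_{u_0}(u)\|_X\leq C_0\eta+C_1\varepsilon^\rho,\qquad \|\mathcal{J}_{u_0}(u)-\mathcal{J}_{u_0}(v)\|_X\leq 2C_1\varepsilon^{\rho-1}\|u-v\|_X .
\]
I will choose $\varepsilon$ with $2C_1\varepsilon^{\rho-1}\leq1/2$ and then $\eta=\varepsilon/(2C_0)$. Banach's fixed point theorem then gives existence and uniqueness in $B_\varepsilon(X)$. Subtracting the fixed-point equations for two data and absorbing the half-contraction gives $\|u-v\|_X\leq 2C_0\|u_0-v_0\|_E$; taking $v_0=0$, whose solution is $v=0$, gives $\|u\|_X\leq C\|u_0\|_E$.

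\textbf{Initial trace.} Fix $\psi\in L^{p',1}$. We have $\langle S(t)u_0-u_0,\psi\rangle\to0$ by the weak-$*$ continuity established after Theorem~\ref{Marcinkzestimate}. For the nonlinear part, the duality estimate restricted to $(0,t)$ gives
\[
|\langle\mathcal{D}(u)(t),\psi\rangle|\leq C\varepsilon^\rho\int_0^t\|S(\tau)\psi\|_{L^{r',1}}\,\mathrm{d}\tau .
\]
The right-hand side tends to $0$ as $t\downarrow0$ by dominated convergence, because Lemma~\ref{Yamazaki} makes the integrand integrable on $(0,\infty)$. Hence $u(t)\stackrel{*}\rightharpoonup u_0$ in $E$.
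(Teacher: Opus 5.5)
Your proposal is correct and follows the paper's proof essentially step for step: duality against $L^{p',1}$, the symmetry identity \eqref{langle}, Lorentz H\"older with the pair $L^{r,\infty}$--$L^{r',1}$, Lemma~\ref{Yamazaki} with $a=p'$, $b=r'$ and vanishing time weight, and then the standard contraction and absorption argument. You are slightly more complete than the paper in two places. First, the paper checks the weak-$*$ trace only for $S(t)u_0$, and at $\theta=1$ the decay $t^{1-\theta}\to0$ from the local theory is no longer available, so your absolute-continuity argument using the integrability of $\tau\mapsto\|S(\tau)\psi\|_{L^{r',1}}$ is what actually handles $\mathcal{D}(u)(t)$. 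Second, the condition $r>1$ that you flag is automatic, since $\rho<p$ together with \eqref{hypo2} gives $1/r=1/p+2/Q<\alpha/d<1$.
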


\begin{proof}
Let $\mathcal{J}$ be the function defined in Theorem~\ref{Localtheorem}. By Theorem~\ref{Marcinkzestimate}, the linear part satisfies
\begin{align*}
\left\|S(t)u_{0}\right\|_{E}
\leq
C_{0}'\left\|u_{0}\right\|_{E},
\end{align*}
for some constant $C_{0}'>0$.

We estimate the inhomogeneous part as follows. We denote the nonlinear term as 
\begin{align*}
\mathcal{N}(u)=I_{\alpha}(|u|^{\rho}).
\end{align*}Using Theorem \ref{generalisedholder} and employing \eqref{langle} we have 
\begin{align}
\label{bigeqn}
\nonumber\bigg\|&\int_{0}^{t} S(t-s)\mathcal{N}(u(s))\ \mathrm{d}s\bigg\|_{L^{(p,\infty)}}\\
\nonumber&=\sup\limits_{\|\varphi\|_{L^{(p',1)}}=1}\bigg|\int_{\mathbb{R}^{d}}\left(\int_{0}^{t}S(t-s)\mathcal{N}(u(s))\ \mathrm{d}s\right)\varphi(y)\ \mathrm{d}y\bigg|\\
\nonumber&=\sup\limits_{\|\varphi\|_{L^{(p',1)}}=1}\bigg|\int_{0}^{t}\int_{\mathbb{R}^{d}}S(t-s)\mathcal{N}(u(s))\varphi(y)\ \mathrm{d}y\ \mathrm{d}s\bigg|\\
\nonumber&=\sup\limits_{\|\varphi\|_{L^{(p',1)}}=1}\bigg|\int_{0}^{t}\int_{\mathbb{R}^{d}}\mathcal{N}(u(s))S(t-s)\varphi(y)\ \mathrm{d}y\ \mathrm{d}s\bigg|\\
\nonumber&\leq\sup\limits_{\|\varphi\|_{L^{(p',1)}}=1}\int_{0}^{t}\int_{\mathbb{R}^{d}}\mathcal{N}(u(s))S(t-s)\varphi(y)\big|\ \mathrm{d}y\ \mathrm{d}s\\
&\leq\sup\limits_{\|\varphi\|_{L^{(p',1)}}=1}\int_{0}^{t}\|\mathcal{N}(u(s))\|_{L^{r,\infty}}\|S(t-s)\varphi(y)\|_{L^{r,1}}\ \mathrm{d}s
\end{align}

We choose $r>1$ be such that  $1\leq p\leq r\leq \infty$ and  
\begin{align*}
\frac{1}{r}=\frac{\alpha}{d}+\frac{\rho}{p}-1.
\end{align*}
Then by Theorem \ref{hardysobolevinequlity} we have 
\begin{align*}
\|I_{\alpha}(|u(s)|^{\rho})\|_{L^{r,\infty}}&\leq C\||x|^{-\alpha}\|_{L^{\frac{d}{\alpha}, \infty}}\||u(s)|^{\rho}\|_{L^{\frac{p}{\rho},\infty}}\\
&\leq C'\|u(s)\|_{L^{p,\infty}}^{\rho}\\
&\leq C'\sup\limits_{s\in (0,\infty)}\|u(s)\|_{L^{p,\infty}}^{\rho}=\|u\|_{X}^{\rho}.
\end{align*}

Employing this estimate from \eqref{bigeqn} we have 
\begin{align}
\label{lteqn}
\bigg\|&\int_{0}^{t} S(t-s)I_{\alpha}(|u(s)|^{\rho})\ \mathrm{d}s\bigg\|_{E} \leq C'\|u\|^{\rho}_{X}\sup\limits_{\|\varphi\|_{L^{p',1}}=1}\int_{0}^{t}\|S(t-s)\varphi(y)\|_{L^{r,1}}\ \mathrm{d}s
\end{align}
Note that by the  hypothesis \eqref{hypo2} we have 
\begin{align*}
\frac{\rho-1}{p}=\frac{2}{d}-\frac{\alpha}{d}+1\Rightarrow \frac{Q}{2}\left(\frac{\alpha}{d}-\frac{\rho-1}{p}-1\right)=1.
\end{align*}
Therefore
\begin{align*}
&\frac{Q}{2}\left(\frac{1}{p'}-\frac{1}{r'}\right)-1\\
&=\frac{Q}{2}\left(\frac{1}{r}-\frac{1}{p}\right)-1\\
&=\frac{Q}{2}\left(\frac{\alpha}{d}+\frac{\rho-1}{p}-1\right)-1=0.
\end{align*}
Thus by using Yamazaki-type  inequality, Theorem \ref{Yamazaki} we have 
\begin{align*}
&\sup\limits_{\|\varphi\|_{L^{p',1}}=1}\int_{0}^{t}\|S(t-s)\varphi\|_{L^{r',1}}\ \mathrm{d}s\\
&=\sup\limits_{\|\varphi\|_{L^{p',1}}=1}\int_{0}^{t}\|S(s)\varphi\|_{L^{r',1}}\ \mathrm{d}s\\
&\leq \sup\limits_{\|\varphi\|_{L^{p',1}}=1}\int_{0}^{\infty}\|S(s)\varphi\|_{L^{r',1}}\ \mathrm{d}s\\
&=\sup\limits_{\|\varphi\|_{L^{p',1}}=1}\int_{0}^{\infty}s^{\frac{Q}{2}\left(\frac{1}{p'}-\frac{1}{r'}\right)-1}\|S(s)\varphi\|_{L^{r',1}}\ \mathrm{d}s\\
&\leq C_{1}'\sup\limits_{\|\varphi\|_{L^{p',1}}=1}\|\varphi\|_{L^{p', 1}}= C_{1}',
\end{align*}
for some constant $C_{1}'>0$. Thus from \eqref{lteqn} we finally obtain 
\begin{align*}
\bigg\|&\int_{0}^{t} S(s)I_{\alpha}(|u(s)|^{\rho})\ \mathrm{d}s\bigg\|_{L^{p,\infty}}\leq C'' \|u\|_{X}^{\rho}.
\end{align*}
From which we can conclude
\begin{align}
\label{ballestimaet}
\|\mathcal{J}(u)\|_{X}\leq C_{0}(\|u_{0}\|_{E}+\|u\|_{X}^{\rho})
\end{align}
 with some uniform constant $C_{0}$.
 For $\varepsilon>0$, define
\[
B_{\varepsilon}
=
\Bigl\{
u\in X
:\ 
\|u\|_{X}\leq \varepsilon
\Bigr\},
\]
which is the closed ball of radius $\varepsilon$ centered at the origin in $X$.
 We next show that \(\mathcal{J}\) maps \(B_{\varepsilon}\) into itself for a suitable choice of \(\varepsilon>0\). Suppose that the initial datum satisfies
\[
\|u_{0}\|_{E}
\leq
\frac{\varepsilon}{2C_{0}}.
\]
Then, for any \(u\in B_{\varepsilon}\), estimate \eqref{ballestimaet} yields
\[
\|\mathcal{J}(u)\|_{X}
\leq
\frac{\varepsilon}{2}
+
C_{0}\varepsilon^{3}.
\]

Choosing \(\varepsilon>0\) sufficiently small so that
\[
C_{0}\varepsilon^{\rho-1}\leq \frac12,
\]
we obtain
\[
C_{0}\varepsilon^{\rho-1}
\leq
\frac{\varepsilon}{2}.
\]
Therefore,
\[
\|\mathcal{J}(u)\|_{X}
\leq
\frac{\varepsilon}{2}
+
\frac{\varepsilon}{2}
=
\varepsilon.
\]

It follows that \(\mathcal{J}(u)\in B_{\varepsilon}\) whenever \(u\in B_{\varepsilon}\). Hence \(\mathcal{J}\) maps \(B_{\varepsilon}\) into itself.

Using Lemma~\ref{fractionallemma} and arguing similarly as above, we can show that
\begin{align}
\label{global:nonlinear-bounds}
\sup\limits_{t>0}\|\int_{0}^{t}S(t-s)(\mathcal{N}(u(s))-\mathcal{N}(v(s)))\ \mathrm{d}s\|_{E}\leq C\left(\|u\|^{\rho-1}_{X}+\|v\|_{X}^{\rho-1}\right)\|u-v\|_{X}
\end{align}

for sufficiently small $\varepsilon>0$, the operator $\mathcal{J}$ is a contraction on the ball

$$
B_{\varepsilon}\subset X.
$$

Therefore, by Banach's fixed-point theorem, $\mathcal{J}$ admits a unique fixed point in $B(0,\varepsilon)$. This fixed point is the desired global-in-time mild solution of \eqref{Grushineq}.

For each fixed $\phi\in L^{p',1}$ by~\eqref{langle}
and strong continuity on $L^{p',1}$,
\[
\langle S(t)u_0-u_0,\phi\rangle
=\langle u_0,S(t)\phi-\phi\rangle\longrightarrow0.
\]
Hence $u(t)\overset{*}{\rightharpoonup} u_0$ in $E$.

Finally, let $u,v$ correspond to two initial data satisfying
the smallness condition. Their mild formulations and
\eqref{global:nonlinear-bounds} imply
\[
\|u-v\|_X
\leq C_0\|u_0-v_0\|_E+\frac12\|u-v\|_X.
\]
Therefore
\[
\|u-v\|_X\leq2C_0\|u_0-v_0\|_E.
\]
Taking $v_0=0$, whose solution is $v=0$, also gives
$\|u\|_X\leq2C_0\|u_0\|_E$. This completes the proof.
\end{proof}
\subsection{The supercritical regime}

We now consider the regime which is supercritical with respect to the
Marcinkiewicz space \(L^{p,\infty}(\mathbb R^d)\), namely
\begin{equation}
\rho>1+\frac{p}{\beta},
\qquad
\frac1\beta=\frac2Q+1-\frac{\alpha}{d}.
\end{equation}
In this range, the exponent

$$
\theta_p
=
\frac Q2
\left(
\frac{\rho-1}{p}+\frac{\alpha}{d}-1
\right)
$$

satisfies \(\theta_p>1\). Consequently, the estimate used in
Section~4 produces the nonintegrable kernel

$$
(t-s)^{-\theta_p},
$$

and the contraction argument in

$$
L^\infty\bigl((0,T);L^{p,\infty}(\mathbb R^d)\bigr)
$$

cannot be extended directly to this range.

The mixed nature of the diffusion, however, allows the supercritical
nonlinearity to be treated after imposing additional spatial
integrability on the initial datum. For this purpose, define
\begin{equation}
\frac1{\beta_\delta}
:=
\frac{2\delta}{Q}+1-\frac{\alpha}{d}.
\label{eq:beta-delta}
\end{equation}
Since \(0<\delta<1\), we have

$$
\beta_\delta>\beta.
$$

For a fixed nonlinear exponent \(\rho>1\), introduce the two
integrability indices
\begin{equation}
p_c:=\beta(\rho-1),
\qquad
p_\delta:=\beta_\delta(\rho-1).
\label{eq:critical-indices-super}
\end{equation}
Thus

$$
p_c<p_\delta.
$$

Suppose now that

$$
1<\rho<p<\infty,
\qquad
\rho>1+\frac{p}{\beta}.
$$

Then \(p<p_c\). Hence the original space \(L^{p,\infty}\) lies on the
supercritical side of the short-time integrability balance. We recover
well-posedness by choosing an auxiliary exponent
\begin{equation}
p_c<q<p_\delta.
\label{eq:q-super-range}
\end{equation}
Notice that \(q>p_c>p>\rho\).

For such a \(q\), define
\begin{equation}
\vartheta
:=
\frac Q2
\left(
\frac{\rho-1}{q}
+\frac{\alpha}{d}-1
\right).
\label{eq:theta-super}
\end{equation}
The inequalities in \eqref{eq:q-super-range} are equivalent to
\begin{equation}
\delta<\vartheta<1.
\label{eq:theta-range}
\end{equation}
Indeed,

$$
q>\beta(\rho-1)
$$

gives

$$
\frac{\rho-1}{q}
<
\frac1\beta
=
\frac2Q+1-\frac{\alpha}{d},
$$

and therefore \(\vartheta<1\). Similarly,

$$
q<\beta_\delta(\rho-1)
$$

implies

$$
\frac{\rho-1}{q}
>
\frac1{\beta_\delta}
=
\frac{2\delta}{Q}
+1-\frac{\alpha}{d},
$$

which gives \(\vartheta>\delta\).

We can now establish a global small-data result.

\begin{theorem}[Supercritical well-posedness in a higher Marcinkiewicz space]
\label{thm:supercritical}
Let

$$
d=N+k,\qquad Q=N+2k,
$$

and suppose that

$$
0<\delta<1,\qquad 0<\alpha<d,
\qquad 1<\rho<p<\infty.
$$

Let \(\beta\) and \(\beta_\delta\) be defined by

$$
\frac1\beta
=
\frac2Q+1-\frac{\alpha}{d},
\qquad
\frac1{\beta_\delta}
=
\frac{2\delta}{Q}+1-\frac{\alpha}{d}.
$$

Assume that
\begin{equation}
\rho>1+\frac{p}{\beta}.
\label{eq:supercritical-p}
\end{equation}
Choose \(q\) satisfying
\begin{equation}
\beta(\rho-1)<q<
\beta_\delta(\rho-1).
\label{eq:q-choice-super}
\end{equation}

Then, for every

$$
u_0\in L^{q,\infty}(\mathbb R^d),
$$

there exists \(T>0\) such that problem \emph{(4.1)} admits a unique mild
solution

$$
u\in
L^\infty\bigl((0,T);L^{(q,\infty)}(\mathbb R^d)\bigr).
$$

Moreover, there exists \(\eta>0\) such that, whenever

$$
\|u_0\|_{L^{(q,\infty)}}\leq\eta,
$$

the corresponding solution is global and satisfies

$$
u\in
L^\infty\bigl((0,\infty);L^{q,\infty}(\mathbb R^d)\bigr)
$$

and

$$
\sup_{t>0}
\|u(t)\|_{L^{q,\infty}}
\leq
C\|u_0\|_{L^{q,\infty}}.
$$

The solution is unique in a sufficiently small ball of
\(L^\infty((0,\infty);L^{q,\infty})\), depends Lipschitz continuously
on the initial datum there, and satisfies

$$
u(t)\stackrel{*}{\rightharpoonup}u_0
\qquad\text{in }L^{q,\infty}(\mathbb R^d)
\quad\text{as }t\downarrow0.
$$

\end{theorem}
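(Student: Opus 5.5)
The plan is to rerun the contraction scheme of Theorem~\ref{Localtheorem} with $L^{p,\infty}$ replaced by $E_q:=L^{q,\infty}(\mathbb{R}^d)$. The point is that the choice \eqref{eq:q-choice-super} makes the Duhamel kernel integrable on all of $(0,\infty)$, not only near $s=t$. The original exponent $p$ plays no role beyond forcing $q>p_c>p>\rho$, so Lemma~\ref{fractionallemma} applies with $q$ in place of $p$.

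\textbf{Admissible exponents.} Define $r_q$ by $\frac1{r_q}=\frac\rho q+\frac\alpha d-1$, so that $\frac Q2\bigl(\frac1{r_q}-\frac1q\bigr)=\vartheta$ from \eqref{eq:theta-super}.
\begin{itemize}
\item Since $\vartheta>\delta>0$ by \eqref{eq:theta-range}, we get $r_q<q$.
\item We also need $r_q>1$, i.e. $\frac{\rho-1}q+\frac\alpha d<2-\frac1q$. From $\vartheta<1$ we have $\frac{\rho-1}q+\frac\alpha d<1+\frac2Q$, so it suffices that $\frac2Q+\frac1q\le 1$. This is the one place where I expect an extra admissibility condition to appear. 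I would state it as the analogue of the hypothesis $1<r\le p$ of Theorem~\ref{Localtheorem} and check it from $q>p_c$; if it does not follow, I would add it to the hypotheses.
\end{itemize}
With $1<r_q<q$, Theorem~\ref{hardysobolevinequlity} and Lemma~\ref{fractionallemma} (applied with $q$ in place of $p$) give the nonlinear estimate
\[
\|\mathcal N(f)-\mathcal N(g)\|_{L^{r_q,\infty}}\le C\bigl(\|f\|_{E_q}^{\rho-1}+\|g\|_{E_q}^{\rho-1}\bigr)\|f-g\|_{E_q}.
\]

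\textbf{Linear estimate.} Theorem~\ref{Marcinkzestimate} with $(p,r,s)=(r_q,q,\infty)$ gives
\[
\|S(t)g\|_{E_q}\le C\,t_\delta^{-\vartheta}\|g\|_{L^{r_q,\infty}},
\qquad
t_\delta^{-\vartheta}=\min\{t^{-\vartheta},t^{-\vartheta/\delta}\}.
\]
The key observation is that $\vartheta<1$ makes this factor integrable at $0$, while $\vartheta/\delta>1$ makes it integrable at $\infty$. Hence
\[
K_\vartheta:=\int_0^\infty \min\{s^{-\vartheta},s^{-\vartheta/\delta}\}\,\mathrm ds=\frac1{1-\vartheta}+\frac{\delta}{\vartheta-\delta}<\infty .
\]
Consequently, for $X_q(T)=L^\infty((0,T);E_q)$ with $T\le\infty$, the Duhamel operator satisfies
\[
\|\mathcal D(u)-\mathcal D(v)\|_{X_q(T)}\le C\min\{K_\vartheta,\ T^{1-\vartheta}/(1-\vartheta)\}\bigl(\|u\|^{\rho-1}+\|v\|^{\rho-1}\bigr)\|u-v\|,
\]
and the same bound holds for $\|\mathcal D(u)\|_{X_q(T)}$ with $\|u\|^\rho$ on the right. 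Measurability and Bochner integrability of the integrand are handled verbatim as in the proof of Theorem~\ref{Localtheorem}.

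\textbf{Local and global solutions.} For arbitrary data, the factor $T^{1-\vartheta}$ lets me repeat the proof of Theorem~\ref{Localtheorem} word for word. This gives a ball of radius $2C_0\|u_0\|_{E_q}$, a small $T$, contraction, uniqueness in $X_q(T)$ by the successive-interval argument, and the weak-$*$ trace via \eqref{langle}. For the global statement, take $T=\infty$ and use the uniform factor $K_\vartheta$. The resulting estimate
\[
\|\mathcal J(u)\|_{X_q(\infty)}\le C_0\bigl(\|u_0\|_{E_q}+\|u\|_{X_q(\infty)}^\rho\bigr),
\]
together with the matching Lipschitz bound, gives a contraction on $B_\varepsilon$ once $C_0\varepsilon^{\rho-1}\le\frac14$ and $\eta=\varepsilon/(2C_0)$. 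Uniqueness in the ball, Lipschitz dependence $\|u-v\|\le 2C_0\|u_0-v_0\|$, and the bound $\sup_t\|u(t)\|_{E_q}\le C\|u_0\|_{E_q}$ (take $v_0=0$) follow exactly as in Theorem~\ref{Globaltheorem}. The weak-$*$ attainment follows from strong continuity of $S(t)$ on $L^{q',1}$ combined with $\|\mathcal D(u)(t)\|_{E_q}\lesssim t^{1-\vartheta}\to0$.

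\textbf{Main obstacle.} The analysis itself is straightforward here; no Yamazaki estimate is needed, since the time kernel is integrable. The real obstacle is the exponent bookkeeping of the first step, namely guaranteeing $r_q>1$ so that O'Neil's inequality and the Lorentz semigroup bound are available.
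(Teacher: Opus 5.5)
Your proposal follows the paper's proof essentially step for step: the same exponent $r$, the same mixed Lorentz bound with factor $\min\{t^{-\vartheta},t^{-\vartheta/\delta}\}$, integrability of that factor on $(0,\infty)$ from $\delta<\vartheta<1$, local contraction via $T^{1-\vartheta}$, global small-data contraction via the uniform time integral, and the weak-$*$ trace by duality. The step you flagged as the main obstacle needs no extra hypothesis, and your detour through $\frac2Q+\frac1q\le1$ is unnecessary: since $q>\rho$ and $\alpha<d$, one has $\frac1{r_q}=\frac\rho q+\frac\alpha d-1<1+1-1=1$, so $r_q>1$ automatically, exactly as the paper observes.
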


\begin{proof}
Write

$$
S(t)=e^{-t\mathcal G},
\qquad
\mathcal G=G+G^\delta,
\qquad
\mathcal N(u)=I_\alpha(|u|^\rho).
$$

Define \(r\) by
\begin{equation}
\frac1r
=
\frac{\rho}{q}+\frac{\alpha}{d}-1.
\label{eq:r-super}
\end{equation}
Since \(q>\rho\) and \(0<\alpha<d\), we have \(1/r<1\). On the other
hand, by \eqref{eq:theta-range},

$$
\frac1r-\frac1q
=
\frac{\rho-1}{q}+\frac{\alpha}{d}-1
=
\frac{2\vartheta}{Q}>0.
$$

Consequently,

$$
1<r<q<\infty.
$$

By O'Neil's convolution inequality and the fact that

$$
|\cdot|^{-\alpha}\in L^{d/\alpha,\infty}(\mathbb R^d),
$$

we obtain
\begin{equation}
\|\mathcal N(u)\|_{L^{r,\infty}}
\leq
C\|u\|_{L^{q,\infty}}^{\rho}.
\label{eq:nonlinear-super}
\end{equation}
Similarly, Lemma~2.9 and O'Neil's inequality give
\begin{align}
\|\mathcal N(u)-\mathcal N(v)\|_{L^{r,\infty}}
\leq
C
\left(
\|u\|_{L^{q,\infty}}^{\rho-1}
+
\|v\|_{L^{q,\infty}}^{\rho-1}
\right)
\|u-v\|_{L^{q,\infty}}.
\label{eq:nonlinear-diff-super}
\end{align}

Since

$$
\frac Q2
\left(
\frac1r-\frac1q
\right)
=\vartheta,
$$

Theorem~3.5 yields
\begin{equation}
\|S(t)f\|_{L^{q,\infty}}
\leq
C K_{\vartheta(t)}
\|f\|_{L^{r,\infty}},
\label{eq:semigroup-super}
\end{equation}
where
\begin{equation}
K_{\vartheta(t)}
:=
\min
\left\{
t^{-\vartheta},
t^{-\vartheta/\delta}
\right\}
=
\begin{cases}
t^{-\vartheta}, & 0<t\leq1,\\
t^{-\vartheta/\delta}, & t\geq1.
\end{cases}
\label{eq:K-super}
\end{equation}

The important point is that \(\delta<\vartheta<1\). Hence
\(K_\vartheta\in L^1(0,\infty)\), since
\begin{align}
\int_0^\infty K_\vartheta(t),dt
&=
\int_0^1t^{-\vartheta},dt
+
\int_1^\infty t^{-\vartheta/\delta},dt\\
&=
\frac1{1-\vartheta}
+
\frac{\delta}{\vartheta-\delta}
<\infty.
\label{eq:kernel-integrable}
\end{align}
Here the order-two Grushin term controls the singularity near \(t=0\),
whereas the fractional component gives the stronger decay required at
large times.

We first prove local existence. For \(0<T\leq1\), set

$$
X_q(T)
=
L^\infty\bigl((0,T);L^{q,\infty}(\mathbb R^d)\bigr)
$$

with

$$
\|u\|_{X_q(T)}
=
\operatorname*{ess\,sup}_{0<t<T}
\|u(t)\|_{L^{q,\infty}}.
$$

Define

$$
\mathcal J_{u_0}(u)(t)
=
S(t)u_0+
\int_0^t
S(t-s)\mathcal N(u(s))\,ds.
$$

Using \eqref{eq:nonlinear-super} and
\eqref{eq:semigroup-super}, we obtain
\begin{align}
\left\|
\int_0^t
S(t-s)\mathcal N(u(s)),ds
\right\|_{L^{q,\infty}}
&\leq
C\|u\|_{X_q(T)}^\rho
\int_0^t(t-s)^{-\vartheta}\ ds\\
&\leq
C T^{1-\vartheta}
\|u\|_{X_q(T)}^\rho.
\end{align}
Similarly,
\begin{align}
\|\mathcal J_{u_0}(u)-\mathcal J_{u_0}(v)\|_{X_q(T)}
\leq
C T^{1-\vartheta}
\left(
\|u\|_{X_q(T)}^{\rho-1}
+
\|v\|_{X_q(T)}^{\rho-1}
\right)
\|u-v\|_{X_q(T)}.
\end{align}
Since \(1-\vartheta>0\), the usual contraction argument gives a unique
local mild solution for arbitrary
\(u_0\in L^{q,\infty}\).

We now prove the global small-data assertion. Set

$$
X_q
=
L^\infty\bigl((0,\infty);L^{q,\infty}(\mathbb R^d)\bigr).
$$

By \eqref{eq:kernel-integrable},
\begin{align}
\left\|
\int_0^t
S(t-s)\mathcal N(u(s)),ds
\right\|_{L^{q,\infty}}
&\leq
C\|u\|_{X_q}^\rho
\int_0^tK_\vartheta(t-s)\ \mathrm{d}s\\
&\leq
C_\vartheta\|u\|_{X_q}^{\rho},
\end{align}
uniformly for \(t>0\). Hence
\begin{equation}
\|\mathcal J_{u_0}(u)\|_{X_q}
\leq
C_0\|u_0\|_{L^{q,\infty}}
+
C_\vartheta\|u\|_{X_q}^\rho.
\label{eq:global-super-map}
\end{equation}
Likewise,
\begin{align}
\|\mathcal J_{u_0}(u)-\mathcal J_{u_0}(v)\|_{X_q}
\leq
C_{\vartheta}
\left(
\|u\|_{X_q}^{\rho-1}
+
\|v\|_{X_q}^{\rho-1}
\right)
\|u-v\|_{X_q}.
\label{eq:global-super-contraction}
\end{align}

Choose \(\varepsilon_*>0\) sufficiently small that

$$
2C_\vartheta\varepsilon_*^{\rho-1}<\frac12,
$$

and then choose \(\eta_*>0\) such that

$$
C_0\eta+
C_\vartheta\varepsilon^\rho
\leq
\varepsilon.
$$

The map \(\mathcal J_{u_0}\) is then a contraction on the closed ball

$$
B_{\varepsilon}(X_q)
=
\{u\in X_q:\|u\|_{X_q}\leq\varepsilon\}.
$$

The Banach fixed-point theorem gives a global mild solution and
uniqueness in this ball.

If \(u\) and \(v\) correspond to two sufficiently small initial data,
then \eqref{eq:global-super-contraction} gives

$$
\|u-v\|_{X_q}
\leq
C\|u_0-v_0\|_{L^{q,\infty}},
$$

after absorbing the nonlinear term. Taking \(v_0=0\) yields

$$
\|u\|_{X_q}
\leq
C\|u_0\|_{L^{q,\infty}}.
$$

Finally, for \(0<t\leq1\),
\begin{align}
\left\|
\int_0^t
S(t-s)\mathcal N(u(s)),ds
\right\|_{L^{q,\infty}}
\leq
C t^{1-\vartheta}\|u\|_{X_q}^\rho
\longrightarrow0
\end{align}
as \(t\downarrow0\). The weak-\(*\) continuity of \(S(t)\) on
\(L^{q,\infty}\), established in Section~3, therefore gives

$$
u(t)\stackrel{*}{\rightharpoonup}u_0
\qquad\text{in }L^{q,\infty}
$$

as \(t\downarrow0\). This completes the proof.
\end{proof}

\begin{remark}
The condition

$$
\beta(\rho-1)<q<\beta_\delta(\rho-1)
$$

has a transparent interpretation. At the lower endpoint
\(q=\beta(\rho-1)\), one has \(\vartheta=1\), which is precisely the
short-time critical case treated in Section~5 by the Yamazaki
estimate. At the upper endpoint
\(q=\beta_\delta(\rho-1)\), one has \(\vartheta=\delta\), so that the
large-time part of the kernel behaves like \(t^{-1}\). Thus the open
interval above is exactly the range in which the pointwise mixed
semigroup estimate is integrable over the entire time axis.
\end{remark}

\begin{remark}
The condition

$$
\rho>1+\frac{p}{\beta}
$$

should be understood as supercriticality relative to the prescribed
space \(L^{p,\infty}\), and not as a sharp Fujita threshold. In fact,
for this range the short-time Duhamel kernel associated with
\(L^{p,\infty}\) has exponent strictly larger than one. Therefore the
preceding result does not assert well-posedness for arbitrary
\(L^{p,\infty}\) data. Instead, it shows that the same supercritical
nonlinearity can be treated after passing to the higher-integrability
Marcinkiewicz space \(L^{q,\infty}\).
\end{remark}


\section*{Concluding Remarks}

In this paper, we have developed a well-posedness theory in Marcinkiewicz spaces for the semilinear heat equation

$$
\partial_tu+(G+G^\delta)u=I_\alpha(|u|^\rho)
$$

associated with a mixed local--nonlocal Grushin operator, where \(0<\delta<1\), together with a spatially nonlocal Riesz-potential source. A central ingredient is the derivation of Lebesgue and Lorentz smoothing estimates for the mixed semigroup

$$
S(t)=e^{-t(G+G^\delta)},
$$

obtained through spectral calculus, subordination, and interpolation. These estimates exhibit two different diffusion scales: the second-order Grushin behaviour at short times and the fractional decay at large times. The nonlinear estimates consequently involve both the homogeneous dimension \(Q=N+2k\) of the Grushin geometry and the Euclidean dimension \(d=N+k\) associated with the potential kernel.
Considering

$$
\frac1\beta=\frac2Q+1-\frac{\alpha}{d},
$$

we obtain a well-posedness theory covering three distinct regimes. In the subcritical range

$$
\rho<1+\frac{p}{\beta},
$$

we prove local existence and uniqueness in \(L^{p,\infty}\), together with Lipschitz dependence on the initial data and a blow-up alternative. At the critical relation

$$
\rho=1+\frac{p}{\beta},
$$

the pointwise semigroup estimate leads to a nonintegrable time singularity. By combining Lorentz duality with a Yamazaki-type integral estimate, we overcome this endpoint obstruction and obtain global mild solutions for sufficiently small initial data in \(L^{p,\infty}\).

We further treat the regime

$$
\rho>1+\frac{p}{\beta},
$$

which is supercritical relative to the prescribed space \(L^{p,\infty}\). Although the corresponding short-time Duhamel kernel is no longer integrable in this space, the mixed nature of the diffusion allows well-posedness to be recovered after imposing higher spatial integrability. Defining

$$
\frac1{\beta_\delta}
=
\frac{2\delta}{Q}+1-\frac{\alpha}{d},
$$

we show that, whenever

$$
\beta(\rho-1)<q<\beta_\delta(\rho-1),
$$

the problem is locally well posed for initial data in \(L^{q,\infty}\), while sufficiently small data generate global mild solutions. Thus the local and fractional components of the mixed operator play complementary roles, the second-order part controls the short-time behaviour, whereas the fractional component determines the admissible large-time integrability range.

These results show that Marcinkiewicz spaces provide a natural framework for combining singular initial data, degenerate Grushin diffusion, and a spatially nonlocal nonlinear source. They also reveal that the transition between the subcritical, critical, and supercritical regimes is governed not only by the nonlinear exponent but also by the integrability class of the initial datum and by the competing diffusion scales of \(G\) and \(G^\delta\). The critical and supercritical conditions considered here are understood in this Marcinkiewicz-space sense and should not be interpreted as sharp Fujita thresholds, whose determination would require corresponding nonexistence and blow-up results.

\section*{Acknowledgements} The first author acknowledges support from the ANRF-ARG MATRICS Grant(002342).\\ The second author acknowledges the financial assistance provided by the University Grants Commission (UGC), India (File No. 231610192540) during the course of the Ph.D. programme.
\section*{Conflict of Interest}
The authors declare that there are no potential competing interests.

\bibliographystyle{abbrv}
\bibliography{ref}

\end{document}